\def\TITLE{Varieties of modal algebras without the congruence extension property}
\def\AUTHOR{Zal\'an Gyenis\thanks{Jagiellonian University},\quad\quad Zal\'an Moln\'ar\thanks{E\"otv\"os Lor\'and Univeristy}}
\def\DATE{\today}
\def\ABSTRACT{In a recent paper, Krawczyk \cite{Krawczyk2023} proved that there
are continuum many axiomatic extensions of global consequence associated with the modal system $E$ that do not admit the local deduction detachment theorem. In algebraic parlance, he showed that there are continuum many varieties of modal algebras lacking the congruence extension property. In this paper, we extend Krawczyk's results and construct a continuum of varieties of modal algebras that do not have the congruence extension property, but that do admit other, logically relevant properties, such as monotonicity, extensiveness, idempotency, normality, etc. This gives a 
continuum of axiomatic extensions of the corresponding modal systems not having the local deduction detachment theorem.}
\def\KEYWORDS{Modal algebras, Congruential modal logics, Congruence extension property, Local deduction detachment theorem} % can be left empty
\def\SUBJCLASS{Primary 03B45, 03G25; Secondary 03G27}   % can be left empty
\def\enddefsymbol{$\square$}				% symbol used at the end of definitions
\def\endproofsymbol{$\blacksquare$}			% symbol used at the end of proofs
\newcommand{\MAKETITLE}
{
\title{\textsc{\textbf{\MakeLowercase{\TITLE}}}}
\author{\AUTHOR}
\date{\DATE}
\maketitle
\begin{abstract}
	\noindent \ABSTRACT

	\ifx\KEYWORDS\empty\else
	  \vspace{5mm}
	  \noindent {\bf Keywords:} \KEYWORDS.
	\fi
	\ifx\SUBJCLASS\empty\else
	
	  \vspace{1mm}
	  \noindent {\bf Subject classification:} \SUBJCLASS.
	\fi
\end{abstract}

\vspace{5mm} \normalsize	
}
\newcommand{\gyzFormatSec}[1]{\textbf{\textsc{\MakeLowercase{#1}}}}
\let\osection\section
\def\section{\@tempskipa\lastskip\removelastskip
\penalty-20
%\vskip 0pt plus 60pt\penalty -20\vskip 0pt plus -60pt
\vskip\@tempskipa\osection}
\let\osubsection\subsection
\def\subsection{\@tempskipa\lastskip\removelastskip
\penalty-20
%\vskip 0pt plus 60pt\penalty-20\vskip 0pt plus -60pt
\vskip\@tempskipa\osubsection}
\newtheoremstyle{mythmstyle}% name
  {\topsep}%Space above
  {\topsep}%Space below
  {\normalfont}%Body font
  {}%Indent amount 1
  {}% Theorem head font
  {\bfseries.}%Punctuation after theorem head
  {.7em}%Space after theorem head 2
 {{{\bfseries\thmname{#1}}~{\bfseries\thmnumber{#2}}{\normalfont{\thmnote{ (#3)}}}}} %{{{\bfseries\usefont{T1}{yvtj}{b}{n}\thmname{#1}}~{\bfseries\thmnumber{#2}}{\normalfont{\thmnote{ (#3)}}}}}
\theoremstyle{mythmstyle} 
\newtheorem{theorem}{Theorem}[section]		% subsection
\newtheorem{lemma}[theorem]{Lemma}
\newtheorem{remark}[theorem]{Remark}
\newtheorem{proposition}[theorem]{Proposition}
\newtheorem{corollary}[theorem]{Corollary}
\newtheorem{construction}[theorem]{Construction}
\newtheorem{defi/}[theorem]{Definition}
\newtheoremstyle{mynotestyle}% name
  {\topsep}%Space above
  {\topsep}%Space below
  {\normalfont}%Body font
  {}%Indent amount 1
  {}% Theorem head font
  {\bfseries.}%Punctuation after theorem head
  {.7em}%Space after theorem head 2
  {{{\bfseries\thmnumber{#2}}{\normalfont{\thmnote{ (#3)}}}}}
\theoremstyle{mynotestyle}
\renewenvironment{proof}[1][\unskip]{%
\par
\noindent
\textbf{Proof #1.}
\noindent}
{\hfill\endproofsymbol

\bigskip}
\def\NN{\mathbb{N}}
\def\D{\mathop{\Diamond}}
\def\B{\mathop{\Box}}
\def\0{0}
\def\1{1}
\def\Bl{\mathfrak{Bl}}
\def\VAR{\mathbf{V}}
\def\models{\vDash}
\def\nmodels{\nvDash}
\def\f{\mathop{f}}
\def\Bao{\mathsf{Bf}}
\def\LAND{\sqcap}
\def\LOR{\sqcup}
\def\UAR{\mathbf{U}}
\def\star{{}^{*}}
\newenvironment{manualtheorem}[1]{%
  \manualtheoreminner
}{\endmanualtheoreminner}
\def\bfS{\mathbf{S}}
\def\bfE{\mathbf{E}}
\def\bfK{\mathbf{K}}
\def\bfER{\mathbf{ER}}
\def\bfEC{\mathbf{EC}}
\def\bfENS{\mathbf{ENS}}
\def\bfENMRI{\mathbf{ENMRI}}
\def\[#1\]{\begin{align}#1\end{align}}
\def\land{\wedge}       
\def\lor{\vee} 
\def\Land{\bigwedge}    
\def\lnot{\neg}
\let\models=\vDash
\def\proves{\vdash}
\renewcommand{\setminus}{\smallsetminus}
\def\defeq{\overset{\text{def}}{=}}
\DeclareMathOperator{\Hom}{Hom}
\DeclareMathOperator{\Con}{Con}
\def\HSP{\mathbf{HSP}}
\def\gA{\mathfrak{A}}
\def\gB{\mathfrak{B}}
\def\gC{\mathfrak{C}}
\def\gP{\mathfrak{P}}
\def\gW{\mathfrak{W}}
\def\cP{\mathcal{P}}
\def\cW{\mathcal{W}}
\def\apeqA{\SavedStyle\sim}
\def\SIM{\setstackgap{L}{\dimexpr.5pt+0.5\LMpt}\ensurestackMath{%
  \ThisStyle{\mathrel{\Centerstack{{\apeqA} {\apeqA}}}}}}
\begin{document}
	\MAKETITLE

\section{Overview}

The standard propositional modal language is given by a countably infinite set
of propositional letters $P$ and the usual logical connectives $\land$, $\lnot$, $\bot$, and $\D$. Other connectives such as $\top$, $\B$, $\to$, $\lor$, etc. are taken as defined in the familiar ways. Let $\bfS$ be a modal system, i.e. a set of proof rules and axiom schemes. For a formula $\psi$ and a set of formulas $\Gamma$
we write $\Gamma\proves_{\bfS}\psi$ and say that $\psi$ can be derived from $\Gamma$, if there is a Hilbert-style derivation of $\psi$ that uses only the formulas of $\Gamma$, and the axioms and the proof rules of $\bfS$. 
The modal system $\bfE$ has any axiomatization of propositional calculus as its axiom schemes, and has the modus ponens (MP) and the congruentiality rule (RE) as its proof rules.\footnote{Sometimes axioms are given using propositional variables and a rule of substitution is given that permits to substitute any formula for them. From now on we shall instead regard axioms as schematic, so that substitution is superfluous and indeed admissible.}
$$
    (RE)\qquad \frac{\varphi\leftrightarrow \psi}{\B \varphi\leftrightarrow\B \psi}
    \qquad\qquad\qquad\qquad
    (MP)\qquad \frac{\varphi, \varphi\rightarrow\psi}{\psi}
$$
The set of $\proves_{\bfE}$-derivable formulas $E$ is referred to as the smallest \emph{classical modal logic} or \emph{congruential modal logic}, i.e. this is the smallest set of modal formulas closed under instances of propositional tautologies and the rules (RE) and (MP), cf. \cite{Pacuit2017}. 

The consequence relation $\proves_{\bfS}$ (resp. the system $\bfS$) is an \emph{axiomatic extension} of $\proves_{\bfE}$ (resp. the system $\bfE$), if there is a set of formulas $\Sigma$, such that 
$$
    \Gamma\proves_{\bfS}\psi
        \quad\text{ if and only if }\quad
    \Gamma\cup\Sigma \proves_{\bfE} \psi
$$
for any set $\Gamma\cup\{\psi\}$ of formulas. This means that $\proves_{\bfS}$ can be obtained by adding new axioms $\Sigma$ to the system $\bfE$.
By a \emph{Boolean frame} ($\Bao$, for short) we understand a structure $\gA = \<A, \LAND, -, \0, \f\>$, where $\<A, \LAND, -, \0\>$ is a Boolean algebra,\footnote{We make use of other standard Boolean operations such as $\LOR$, $\to$, $\1$. These are defined in the usual way.} and $\f:A\to A$ is an arbitrary unary function.\footnote{In parts of the literature a modal algebra is a Boolean algebra with an extra \emph{operator} $\D$ that is \emph{normal} and \emph{additive} (that is, $\D\0=\0$ and $\D(a\lor b) = \D a\lor \D b$; or, using the dual operator $\B$ we can write $\B\1=\1$ and $\B(a\land b)=\B a\land\B b$; see e.g. \cite[Def. 10.1]{Ono2019}). In some other parts of the literature, e.g. in \cite{Krawczyk2023}, a modal algebra
is just a Boolean algebra with an arbitrary extra unary function. To resolve this conflict we follow \cite{Hansson1973} and use the expression Boolean frame to refer to modal algebras in the latter sense.} The class $\Bao$ is an equational class (variety) defined by the Boolean equations. The congruentiality rule written in the quasi-equational form $x=y\rightarrow \f(x)=\f(y)$ holds in any Boolean frame. It is known that $\proves_{\bfE}$ is algebraizable in the Blok--Pigozzi \cite{BP89} sense by the class $\Bao$. This means the following.
Consider the propositional letters as algebraic variables, and define the translation $\iota$ between modal formulas and algebraic terms inductively by letting $\iota(p)=p$ for $p\in P$, and
$$
    \iota(\bot)=\0,  \quad
    \iota(\varphi\land\psi) = \iota(\varphi)\LAND\iota(\psi), \quad
    \iota(\lnot\varphi)= -\iota(\varphi), \quad
    \iota(\D\varphi) = f(\iota(\varphi)).    
$$
Then for any set of formulas $\Gamma\cup\{\psi\}$ we have
\[
    \Gamma\proves_{\bfE}\psi
        \quad\text{ if and only if }\quad
    \Bao\models \Land_{\gamma\in\Gamma}\iota(\gamma)=\1\  \rightarrow\ 
    \iota(\psi)=\1
%    \text{ for every } \gA\in\Bao\text{ and every } h\in\Hom(Trm,\gA) \\
%    & \text{ if } h(\iota(\gamma)) = \1\text{ for every } \gamma\in\Gamma, \text{ then }  h(\iota(\psi))=\1.    
\]
\iffalse
Define the relation $\models_{\Bao}$ between a set of
equations $\{\sigma_i=\tau_i: i\in I\}$ and an a single equation $\sigma=\tau$ by
\begin{align*}
    \{\sigma_i=\tau_i: i\in I\} \models_{\Bao}\ \sigma=\tau \quad \text{ iff }&
    \text{ for every } \gA\in\Bao\text{ and every } h\in\Hom(Fm,\gA) \\
    & \text{ if } h(\sigma_i) = h(\tau_i)\text{ for every } i\in I, \text{ then }
    h(\sigma)=h(\tau).    
\end{align*}
Then for a set of formulas $\Gamma\cup\{\psi\}$ we have
\[
    \Gamma\proves_{\bfE}\psi
        \quad\text{ if and only if }\quad
    \{ \iota(\gamma)=\1: \gamma\in \Gamma\} \models_{\Bao} \iota(\psi)=1
\]
\fi
In the special case when $\Gamma$ is empty, we obtain
\[
    \proves_{\bfE}\psi\leftrightarrow\varphi
    \quad&\text{ if and only if}\quad
    \Bao\models \iota(\psi)=\iota(\varphi), \text{ and }\\
    \proves_{\bfE}\psi
    \quad&\text{ if and only if}\quad
    \Bao\models \iota(\psi)=\1.
\]
Algebraizability of $\proves_{\bfE}$ by a \emph{variety} also ensures that the lattice of axiomatic extensions of $\proves_{\bfE}$ is dually isomorphic to the lattice of subvarieties of $\Bao$ (see e.g. \cite{Krawczyk2023}). \\

A consequence relation $\proves$ has a \emph{local deduction detachment theorem} (LDDT) if for any $\Sigma\cup\{\varphi,\psi\}$ there is a finite set of formulas $L(p,q)$ such that 
$$
	\Sigma\cup\{\varphi\}\proves\psi \quad\text{ if and only if }\quad
	\Sigma\proves L(\varphi, \psi).
$$
An algebra $\gA$ has the \emph{congruence extension property} (CEP) if for any subalgebra $\gB\subseteq\gA$ and any congruence $\Theta\in\Con(\gB)$ there
is $\Psi\in\Con(\gA)$ such that $\Theta = \Psi\cap(B\times B)$. A class of algebras
has CEP if all its members have CEP. If the consequence relation $\proves$ is algebraizable by a variety $\mathsf{V}$, then the local deduction theorem for axiomatic extensions of $\proves$ corresponds to the congruence extension property of the subvarieties of $\mathsf{V}$ (see Czelakowski \cite{Czelakowski1986}). In particular, an axiomatic extension of $\proves_{\bfE}$ has a LDDT if and only if
the corresponding subvariety of $\Bao$ has the CEP. \\

The modal logic $K$ is the smallest set of formulas closed under instances
of propositional tautologies, the axiom scheme (K), and the rules (MP) and (Nec).
$$
    (K)\qquad \B(\varphi\to\psi)\to(\B\varphi\to\B\psi)
    \qquad\qquad\qquad\quad
    (Nec)\qquad \frac{\varphi}{\B\varphi}
$$
The system $\bfK$ is defined analogously. 
$K$ is referred to as the smallest \emph{normal modal logic}. It can be proved
that $\proves_{\bfE}$ is strictly weaker than $\proves_{\bfK}$, and that
$\proves_{\bfK}$ is an axiomatic extension of $\proves_{\bfE}$ by adding
the following axioms to $\bfE$:
$$
    (M)\quad \B(\varphi\land\psi)\to(\B\varphi\land\B\psi)
    \qquad
    (C)\quad (\B\varphi\land\B\psi)\to\B(\varphi\land\psi)
    \qquad
    (N) \quad \B\top
$$
(See Corollary 2.1 in \cite{Pacuit2017}). Even though $\bfK$ does not have
a deduction theorem \cite{Czelakowski1985}, it has a local deduction detachment theorem (Perzanowski, cf. \cite{Czelakowski1986}). Moreover,
every axiomatic extension of $\proves_{\bfK}$ has a LDDT, equivalently, every \emph{normal and additive} subvariety of $\Bao$ has the CEP.\footnote{This statement is part of the folklore, mentioned by e.g. \cite[Proposition 1]{Kowalski2000}. For completeness, we give a short proof of this statement in the Appendix, and in a subsequent proposition, we also show that normality is not needed for the result.}\\

In a recent paper, Krawczyk \cite{Krawczyk2023} proved that there
are continuum many axiomatic extensions of $\proves_{\bfE}$ that do not admit the local deduction detachment theorem. In algebraic parlance, he showed that there are continuum many subvarieties of $\Bao$ that lack the congruence extension property (see \cite[Theorem 6.11]{Krawczyk2023}). 
These axiomatic extensions of $\proves_{\bfE}$ are rather ad hoc and lack any of the properties often considered in the literature, such as transitivity, reflexivity, normality, etc. Algebraically,
the $\Bao$-subvarieties constructed in \cite{Krawczyk2023} lack the properties of being idempotent, monotone, normal, etc. It remained open whether there are  monotonic, normal, etc. axiomatic extensions of $\proves_{\bfE}$ without the local deduction detachment theorem. This paper settles this question, improving on the results of \cite{Krawczyk2023}. \\

\noindent The results of the paper are listed below. All the constructions and proofs are in section \ref{sec:constructions}. We list here the main axioms and their algebraic equivalents that we use in the paper.

%In the Appendix, to fix terminology, we list the modal formulas and relevant algebraic properties that we use in the paper.\\

\begin{center}
\begin{tabular}{c|l||l|l}
    name & axiom                        & algebraic equation  & name \\ \hline
    (N)  & $\D\bot\leftrightarrow\bot$  
            & $\f(\0) = \0$ & normal \\
    --  & $\D\top\leftrightarrow\top$ 
            & $\f(\1) = \1$ & unit-preserving \\
    (K)  & $\D(\varphi\lor\psi)\leftrightarrow (\D\varphi\lor\D\psi)$ 
            &    $\f(x\LOR y) = \f(x)\LOR \f(y)$ & additive \\
%         & $\B(\varphi\to\psi)\to(\B\varphi\to\B\psi)$ & & \\
    (R)  & $\varphi\to\D\varphi$ 
            & $x\leq \f(x)$ & extensive \\
    --  & $\D\varphi\to\phi$ 
            & $\f(x)\leq x$ & contractive \\            
    (I)  & $\D\D\varphi\leftrightarrow\D\varphi$ 
            & $\f(\f(x)) = \f(x)$ & idempotent \\
    (S) & $\D\lnot\varphi\leftrightarrow\lnot\D\varphi$ 
            & $\f(-x) = -\f(x)$ & semi-complemented \\
    (M) & $(\D\varphi\lor\D\psi)\to \D(\varphi\lor\psi)$ 
            & $\f(x)\LOR\f(y)\leq \f(x\LOR y)$ & monotone \\
    (C)  & $\D(\varphi\lor\psi)\rightarrow (\D\varphi\lor\D\psi)$ 
            &    $\f(x\LOR y) \leq \f(x)\LOR \f(y)$ & subadditive  \\         
    (B) & $\varphi\to\B\D\varphi$
            & $x\leq -\f(-\f(x))$ & symmetric
\end{tabular}
\end{center}

\noindent Note that the equation corresponding to monotonicity is
indeed equivalent to the quasi-equation $x\leq y \ \Rightarrow\ \f(x)\leq \f(y)$. It is straightforward that (M) and (C) together are equivalent to additivity (K). The axiom (K) is standardly given by the formula
$\B(\varphi\to\psi)\to(\B\varphi\to\B\psi)$. This formulation and the one in the table above are $\proves_{\bfE}$-provably equivalent, see \cite[Lemma 2.8 and Exercise 2.16]{Pacuit2017}. We chose the formulation that is more similar to the corresponding algebraic form.\\

% (N): automatikus
% (U): mindenki elerheto valakibol
% (K): automatikus + add komment
% (R): reflexiv
% (P): discrete
% (I): transitive
% (S): funtion
% (M): automatikus
% (B): symmetric

\bigskip

\begin{center} {\bf The results of the paper}  \end{center}

\noindent {\bf \#1.}
The logic $\bfER$ extends $\bfE$ by the axiom (R).
%\[
 %  \B\varphi\to\varphi\qquad \text{(equivalently: }\varphi\to\D\varphi\text{)} \tag{R}
%\]
The consequence relation $\proves_{\bfER}$ is algebraized by the variety of extensive Boolean frames. As a warming up, in subsection \ref{subsec:extensive}, we give a \emph{simple} construction and proof for the following theorem.

\begin{manualtheorem}{\ref{THM:EXT}}
    There are continuum many \emph{extensive} subvarieties of Boolean frames
	lacking the congruence extension property.
\end{manualtheorem}

\noindent Theorem \ref{THM:EXT} yields:

\begin{corollary}
    $\proves_{\bfER}$ has continuum many axiomatic extensions 
    that do not admit the local deduction detachment theorem. 
\end{corollary}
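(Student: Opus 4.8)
The plan is to obtain the corollary directly from Theorem~\ref{THM:EXT} through the algebraic dictionary assembled in the overview, so that all of the genuine work sits in the theorem and the corollary is a mere translation. First I would note that $\proves_{\bfER}$, being the axiomatic extension of $\proves_{\bfE}$ by the single axiom (R), is algebraized by the variety $\mathsf{E}$ of \emph{extensive} Boolean frames, i.e.\ those $\gA\in\Bao$ satisfying $x\leq\f(x)$. Since $\proves_{\bfE}$ is algebraizable by $\Bao$, the lattice of axiomatic extensions of $\proves_{\bfE}$ is dually isomorphic to the lattice of subvarieties of $\Bao$. The axiomatic extensions of $\proves_{\bfER}$ are exactly the axiomatic extensions of $\proves_{\bfE}$ lying above $\proves_{\bfER}$, and these correspond under the dual isomorphism to the subvarieties of $\Bao$ contained in $\mathsf{E}$; thus restricting the dual isomorphism yields a dual isomorphism between the axiomatic extensions of $\proves_{\bfER}$ and the subvarieties of $\mathsf{E}$.

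Next I would invoke the LDDT/CEP correspondence recalled in the overview (Czelakowski): an axiomatic extension of $\proves_{\bfE}$ admits a local deduction detachment theorem if and only if the corresponding subvariety of $\Bao$ has the congruence extension property. Because the extensions of $\proves_{\bfER}$ are precisely the extensions of $\proves_{\bfE}$ above $\proves_{\bfER}$, this equivalence transfers verbatim: an axiomatic extension of $\proves_{\bfER}$ admits the LDDT if and only if its associated subvariety of $\mathsf{E}$ has the CEP.

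With both correspondences in place the argument closes immediately. By Theorem~\ref{THM:EXT} there are continuum many extensive subvarieties of Boolean frames — that is, continuum many subvarieties of $\mathsf{E}$ — that lack the CEP. Pulling each back through the dual isomorphism gives an axiomatic extension of $\proves_{\bfER}$, and by the transferred equivalence none of these admits the LDDT; since a dual isomorphism is a bijection, distinct subvarieties yield distinct extensions and the cardinality is preserved. I do not anticipate any real obstacle at the level of the corollary — the substantive difficulty is entirely packaged in Theorem~\ref{THM:EXT}, namely the construction of a continuum of extensive subvarieties failing CEP — so the only points requiring care here are the bookkeeping checks that the dual isomorphism for $\proves_{\bfE}$ restricts cleanly to the extensive fragment and that it preserves cardinality, both of which are immediate from its being a lattice anti-isomorphism.
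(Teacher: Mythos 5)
Your proposal is correct and matches the paper's (implicit) argument exactly: the paper derives this corollary directly from Theorem~\ref{THM:EXT} via the dual isomorphism between axiomatic extensions of $\proves_{\bfE}$ and subvarieties of $\Bao$, together with Czelakowski's LDDT/CEP correspondence, both recalled in the overview. Your only addition is to spell out the bookkeeping that these correspondences restrict cleanly to the extensive fragment, which is a harmless and accurate elaboration.
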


To prove Theorem \ref{THM:EXT}, for each subset $X\subseteq \mathbb{N}$
we construct a Boolean frame $\gA_X$ lacking the CEP, in such a way that for $X\neq Y$ the varieties generated by $\gA_X$ and $\gA_Y$ are distinct, shown by equalities.
For each $X\subseteq\NN$ the Boolean frame $\gA_X$ is based on the powerset Boolean algebra $\gP(\NN)$ (cf. Construction \ref{constr:1}). It follows that for each such $X$ there is a neighborhood frame $\<\NN, N_{X}\>$ such that the logic of $\gA_X$ (and hence the logic of the variety generated by $\gA_X$) is the same as the logic of the neighborhood frame $\<\NN, N_{X}\>$.

By turning the algebras $\gA_X$ ``upside-down'' (Construction \ref{constr:contractive}), we get continuum many \emph{contractive} varieties of Boolean frames without the congruence extension property (Theorem \ref{THM:CONT}). This corresponds to continuum many axiomatic extensions of $\bfE + (P)$ that do not admit the local deduction theorem.\\

A small modification in Construction \ref{constr:1} leads to Construction \ref{constr:subadd} in subsection \ref{subsec:subadd}, and yields Theorem \ref{THM:SUBADD} that states that there are continuum many \emph{subadditive} varieties of Boolean frames	lacking the congruence extension property. 
The logic $\bfEC$ extends $\bfE$ by the axiom (C). The consequence relation $\proves_{\bfEC}$ is algebraized by the variety of subadditive Boolean frames.
Theorem \ref{THM:SUBADD} immediately gives us:

\begin{corollary}
    $\proves_{\bfEC}$ has continuum many axiomatic extensions 
    that do not admit the local deduction detachment theorem. 
\end{corollary}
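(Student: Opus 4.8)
The plan is to obtain this corollary as a direct bookkeeping consequence of Theorem~\ref{THM:SUBADD}, using the algebraic dictionary assembled in the overview. First I would recall the two bridging facts already in place: the lattice of axiomatic extensions of $\proves_{\bfE}$ is dually isomorphic to the lattice of subvarieties of $\Bao$, and, by Czelakowski's theorem, such an extension admits the local deduction detachment theorem if and only if the corresponding subvariety has the congruence extension property. Since $\bfEC$ is $\bfE$ augmented by the axiom (C) and $\proves_{\bfEC}$ is algebraized by the variety of subadditive Boolean frames, the axiomatic extensions of $\proves_{\bfEC}$ correspond, under the same dual isomorphism, precisely to the subvarieties of the variety of subadditive Boolean frames.

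Next I would invoke Theorem~\ref{THM:SUBADD}, which furnishes a family $\{\gA_X : X\subseteq\NN\}$ of subadditive Boolean frames whose generated varieties are pairwise distinct and each fail the congruence extension property; there are continuum many of these. Because each generated variety is a subvariety of the subadditive Boolean frames, the dual isomorphism sends it to an axiomatic extension of $\proves_{\bfEC}$. As the correspondence is a bijection, distinct varieties yield distinct extensions, so we obtain continuum many axiomatic extensions of $\proves_{\bfEC}$.

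Finally, applying the CEP/LDDT equivalence pointwise: since each of these varieties lacks the congruence extension property, each corresponding axiomatic extension of $\proves_{\bfEC}$ fails to admit the local deduction detachment theorem. This yields the stated continuum of extensions.

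No step here is genuinely difficult---the real work is packaged inside Theorem~\ref{THM:SUBADD}. The only points deserving a moment's attention are that the constructed varieties genuinely sit inside the subadditive Boolean frames (so that their images under the dual isomorphism are extensions of $\proves_{\bfEC}$ and not merely of $\proves_{\bfE}$), which is exactly what the word \emph{subadditive} in Theorem~\ref{THM:SUBADD} asserts, and that the family really has cardinality continuum with pairwise distinct generated varieties, which Theorem~\ref{THM:SUBADD} also guarantees.
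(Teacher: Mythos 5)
Your proposal is correct and matches the paper's (implicit) argument: the paper derives this corollary immediately from Theorem~\ref{THM:SUBADD} via exactly the algebraizability dictionary you cite (subvarieties of subadditive Boolean frames correspond dually to axiomatic extensions of $\proves_{\bfEC}$, and CEP corresponds to LDDT by Czelakowski's theorem). The only cosmetic slip is that the subadditive frames in the paper are named $\gC_X$, not $\gA_X$.
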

\medskip

% =====================================================
% =====================================================
% =====================================================
% =====================================================
% =====================================================
% =====================================================
% =====================================================
% =====================================================
% =====================================================

\noindent {\bf \#2.} We turn to extensions of $\bfE$ by the axioms (N), (R), (M), and (I). Our main theorem here is:
\begin{manualtheorem}{\ref{thm:main2}}
    For each variety $\VAR$ of \emph{normal} and \emph{unit-preserving} Boolean frames there exists a variety $\VAR\star$ of normal and unit-preserving Boolean frames such that the following properties hold.
    \begin{itemize}\itemsep-2pt
        \item $\VAR\star$ lacks the congruence extension property.
        \item If $\VAR\neq\UAR$, then $\VAR\star\neq\UAR\star$.
        \item The construction $\VAR\mapsto\VAR\star$ preserves the properties of being (not) extensive, (not) monotone, (not) idempotent.\footnote{E.g. if $\VAR$ is monotone but not extensive, then so is $\VAR\star$. We note that $\VAR\star$ is never additive. This is immediate from the construction and also follows from the fact that additive varieties have the congruence extension property, while $\VAR\star$ does not have it. }
    \end{itemize}
\end{manualtheorem}
Further, it will be clear from the construction that if $\VAR$ is generated by a finite set of finite algebras, then so is true for $\VAR\star$. In particular, if $\VAR$ is generated by a single (finite) algebra, then so is $\VAR\star$. Thus, tabularity, finite approximability of $\VAR$ is inherited to $\VAR\star$ (cf. Remark \ref{remark:tab}). \\

\noindent Theorem \ref{thm:main2} gives the following type of results: 
\begin{center}
    If there are $\kappa$ varieties of normal, unit-preserving 
    
    Boolean frames having property P

    $\Downarrow$

    there are $\kappa$ many such varieties without the CEP,
\end{center}
where $P$ is any combination of the properties of being (not) extensive, (not) monotone, (not) idempotent. \\

A normal, extensive, idempotent, and monotone operator is called a closure operator. The consequence relation $\proves_{\bfENMRI}$ is algebraized by the variety of Boolean frames with a closure operator. By applying Theorem \ref{thm:main2}, we get the following result.

\begin{corollary}\label{cor:corcor}
	There are continuum many \emph{normal, extensive, idempotent and monotone} varieties of Boolean frames lacking the congruence extension property.
    Therefore, there are continuum many axiomatic extensions of $\proves_{\bfENMRI}$ that do not admit a LDDT.
\end{corollary}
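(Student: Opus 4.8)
The plan is to read Corollary~\ref{cor:corcor} as a direct application of Theorem~\ref{thm:main2}: once we have a continuum of admissible \emph{input} varieties that are already normal, unit-preserving, extensive, idempotent and monotone, the $\star$-construction produces a continuum of \emph{output} varieties with the same four logically relevant properties but without the CEP. Concretely, let $\VAR$ be any variety of Boolean frames whose operation $\f$ is a closure operator, i.e. normal ($\f(\0)=\0$), extensive ($x\le\f(x)$), idempotent ($\f(\f(x))=\f(x)$) and monotone. Extensivity forces $\1\le\f(\1)$, hence $\f(\1)=\1$, so $\VAR$ is automatically unit-preserving and is therefore a legitimate argument for Theorem~\ref{thm:main2}. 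Because $\VAR$ is extensive, monotone and idempotent, the preservation clause of the theorem guarantees that $\VAR\star$ is again extensive, monotone and idempotent; by the theorem $\VAR\star$ is always normal and unit-preserving and always fails the CEP. Thus each such $\VAR\star$ is a normal, extensive, idempotent, monotone variety of Boolean frames lacking the CEP.

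The only genuine ingredient still needed is a supply of continuum many \emph{pairwise distinct} input varieties of this kind. For this I would invoke closure algebras (topological Boolean algebras), the algebraic semantics of $\mathrm{S4}$: each is a Boolean frame whose $\f$ is additive, normal, extensive and idempotent, hence in particular a closure operator in the sense above, with additivity giving monotonicity for free. It is a classical fact (Blok; Jankov), equivalent to the existence of continuum many normal modal logics extending $\mathrm{S4}$, that there are $2^{\aleph_0}$ distinct varieties of closure algebras, and each of these is an admissible $\VAR$ for Theorem~\ref{thm:main2}. That these inputs are additive is harmless: the corollary claims neither additivity nor its failure, and the footnote to Theorem~\ref{thm:main2} already records that $\VAR\star$ is never additive.

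Feeding this continuum into the $\star$-construction finishes the count. Since $\VAR\mapsto\VAR\star$ is injective on varieties (the clause ``if $\VAR\neq\UAR$ then $\VAR\star\neq\UAR\star$''), the continuum of distinct closure-algebra varieties yields a continuum of distinct $\VAR\star$, each normal, extensive, idempotent and monotone and each lacking the CEP; this is the first assertion. For the logical half I would translate through the algebraization of $\proves_{\bfE}$ by $\Bao$ recalled in the Overview: the lattice of axiomatic extensions of $\proves_{\bfE}$ is dually isomorphic to the lattice of subvarieties of $\Bao$, and by Czelakowski \cite{Czelakowski1986} an axiomatic extension admits the LDDT exactly when the corresponding subvariety has the CEP. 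Since a closure operator is precisely a normal, extensive, idempotent, monotone $\f$, each $\VAR\star$ is a subvariety of the variety algebraizing $\proves_{\bfENMRI}$, so the continuum of CEP-less $\VAR\star$ corresponds to a continuum of axiomatic extensions of $\proves_{\bfENMRI}$ without the LDDT.

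I expect the main (and essentially only) obstacle to lie in the second paragraph: one must be certain that the chosen family is honestly a continuum of \emph{distinct} normal, unit-preserving, extensive, idempotent, monotone varieties. Everything downstream---preservation of the four properties, failure of the CEP, injectivity of $\star$, and the logical translation---is packaged into Theorem~\ref{thm:main2} and the algebraization already established. The proof therefore reduces to citing (or reconstructing) the continuum of $\mathrm{S4}$-type varieties and verifying that closure algebras satisfy the hypotheses of the theorem.
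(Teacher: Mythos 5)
Your proposal is correct and follows essentially the same route as the paper: the authors likewise obtain Corollary~\ref{cor:corcor} by feeding a continuum of closure-algebra (i.e.\ $S4$-type) varieties into Theorem~\ref{thm:main2}, citing Fine \cite{Fine1974} for the continuum of normal modal logics above $S4$ where you cite Blok and Jankov. Your additional observations---that extensiveness yields unit-preservation so the hypotheses of Theorem~\ref{thm:main2} are met, and that additivity of the inputs is harmless---are correct and merely make explicit what the paper leaves implicit.
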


Corollary \ref{cor:corcor} follows from Theorem \ref{thm:main2} once one can construct continuum many varieties of Boolean frames with a closure operator. This has essentially been done e.g. in Fine \cite{Fine1974} disguised as the statement that there exists a continuum of normal modal logics containing the modal logic $S4$. \\

We also note that if $\gA = (A, f)$ is monotone, then $\gA' = (A, -f)$ is antitone, and as the two operations are term-interdefinable, the two algebras have the same subalgebras and congruences. It follows that there are continuum many varieties of \emph{antitone} Boolean frames without the congruence extension property.\\

% =====================================================
% =====================================================
% =====================================================
% =====================================================
% =====================================================
% =====================================================
% =====================================================
% =====================================================
% =====================================================

\noindent {\bf \#3.} $\bfENS$ is the axiomatic extension of $\bfE$ by adding the axioms (N) and (S). The corresponding consequence relation $\proves_{\bfENS}$ is algebraized by the variety of normal and semi-complemented Boolean frames. In subsection \ref{subsec:sc} we prove the theorem below.

\begin{manualtheorem}{\ref{THM:COMP}}
    There are continuum many \emph{normal, unit-preserving and semi-complemented} varieties of Boolean frames lacking the congruence extension property.
\end{manualtheorem}

\noindent Theorem \ref{THM:COMP} immediately gives the following corollary:

\begin{corollary}
    There are continuum many axiomatic extensions of $\proves_{\bfENS}$ that do not admit a LDDT.
\end{corollary}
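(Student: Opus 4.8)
The plan is to read this corollary off Theorem~\ref{THM:COMP} through the algebraic bridge set up in the Overview. Recall that $\proves_{\bfENS}$ is algebraized by the variety of normal and semi-complemented Boolean frames, which we denote $\mathsf{V}_{\bfENS}$. Since $\proves_{\bfENS}$ is itself an axiomatic extension of $\proves_{\bfE}$, the dual isomorphism between the axiomatic extensions of $\proves_{\bfE}$ and the subvarieties of $\Bao$ restricts to a dual isomorphism between the axiomatic extensions of $\proves_{\bfENS}$ and the subvarieties of $\mathsf{V}_{\bfENS}$. Under this correspondence, Czelakowski's theorem~\cite{Czelakowski1986} guarantees that an extension admits the LDDT precisely when the matching subvariety has the CEP.

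First I would check that each variety delivered by Theorem~\ref{THM:COMP} actually lives below $\mathsf{V}_{\bfENS}$. Such a variety is by hypothesis normal and semi-complemented, hence a subvariety of $\mathsf{V}_{\bfENS}$; the additional unit-preserving clause in the statement of Theorem~\ref{THM:COMP} costs nothing here, since in any normal semi-complemented frame one has $\f(\1)=\f(-\0)=-\f(\0)=-\0=\1$, so that unit-preservation is already forced. Consequently every witness supplied by Theorem~\ref{THM:COMP} corresponds, under the restricted dual isomorphism, to a genuine axiomatic extension of $\proves_{\bfENS}$.

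Next I would transport both the cardinality and the failure of the CEP across this correspondence. Theorem~\ref{THM:COMP} supplies continuum many pairwise distinct such varieties, each lacking the CEP. Distinct subvarieties yield distinct extensions, since the correspondence is a bijection on the interval of subvarieties below $\mathsf{V}_{\bfENS}$; hence we obtain continuum many distinct axiomatic extensions of $\proves_{\bfENS}$. Because each witnessing variety fails the CEP, each of the corresponding extensions fails the LDDT, which is exactly the assertion of the corollary.

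In short, all the mathematical content is carried by Theorem~\ref{THM:COMP}, and the argument above is a routine translation once the algebraization dictionary is in hand. The only points needing a second glance are the two book-keeping steps---that the witnessing varieties really are subvarieties of $\mathsf{V}_{\bfENS}$, and that distinctness of varieties transfers to distinctness of logics---both of which are immediate. The genuine obstacle therefore lies not in this corollary but in the proof of Theorem~\ref{THM:COMP}: producing a continuum of Boolean frames that are simultaneously normal and semi-complemented, fail the CEP, and remain pairwise logically distinct.
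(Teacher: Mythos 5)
Your proposal is correct and follows essentially the same route as the paper, which derives this corollary immediately from Theorem~\ref{THM:COMP} via the algebraization dictionary (dual isomorphism between axiomatic extensions and subvarieties, plus Czelakowski's correspondence between LDDT and CEP). Your observation that unit-preservation is already forced by normality together with semi-complementedness is a correct and welcome extra check that the witnessing varieties indeed lie below the variety algebraizing $\proves_{\bfENS}$.
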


\noindent We note that there are only \emph{countably} many \emph{normal modal logics} that contain the axiom (S), see \cite[Exercise 6.22, p. 185]{chagrov1997modal}. It follows that there are only countably many varieties of normal, \emph{additive} and semi-complemented Boolean frames. \\

% =====================================================
% =====================================================
% =====================================================
% =====================================================
% =====================================================
% =====================================================
% =====================================================
% =====================================================
% =====================================================

\def\bfENBR{\mathbf{ENBR}}

\noindent {\bf \#4.} $\bfENBR$ is the axiomatic extension of $\bfE$ by adding the axioms (N), (B), and (R). The corresponding consequence relation
is algebraized by the variety of normal, symmetric, and extensive Boolean frames. In subsection \ref{subsec:symext} we prove:

\begin{manualtheorem}{\ref{thm:main3}}
    There are continuum many \emph{normal, unit-preserving, extensive and symmetric} varieties of Boolean frames lacking the congruence extension property.
\end{manualtheorem}

\noindent An immediate corollary is:
\begin{corollary}
    There are continuum many axiomatic extensions of $\proves_{\bfENBR}$ that do not admit a LDDT.
\end{corollary}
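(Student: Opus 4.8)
The plan is to prove Theorem~\ref{thm:main3} by a direct construction in the spirit of Theorem~\ref{THM:EXT}, producing for each member $X$ of a continuum-sized index set a Boolean frame $\gA_X$ that is normal, unit-preserving, extensive and symmetric, that fails the congruence extension property, and such that $X\neq Y$ forces $\HSP(\gA_X)\neq\HSP(\gA_Y)$. Once this is done the theorem follows immediately, and the displayed corollary about $\proves_{\bfENBR}$ is obtained from the algebraization of $\proves_{\bfE}$ recorded in the Overview, under which the congruence extension property of a subvariety of $\Bao$ corresponds to the local deduction detachment theorem of the matching axiomatic extension. Unlike in Corollary~\ref{cor:corcor}, the star operation $\VAR\mapsto\VAR\star$ of Theorem~\ref{thm:main2} cannot be invoked as a black box here, since it is only guaranteed to preserve extensivity, monotonicity and idempotency, not symmetry; the axiom (B) is the genuinely new constraint and must be built into the frames from the start.

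For the construction I would work over the powerset algebra $\gP(\NN)$ (or a small variant) and define $f$ as a controlled, \emph{non-additive} perturbation of the additive operator coming from a reflexive and symmetric relation $R_X$ on $\NN$ that encodes $X$. Reflexivity of $R_X$ is what will deliver extensivity $x\le f(x)$, and symmetry of $R_X$ is what will deliver the $(B)$-condition $x\le -f(-f(x))$ on the part of the algebra where $f$ agrees with the relational diamond $\D_{R_X}$; normality $f(\0)=\0$ and unit-preservation $f(\1)=\1$ are arranged by fiat. Additivity, however, must be destroyed, since additive varieties have the congruence extension property; I would therefore alter the value of $f$ on a few distinguished elements — the same elements that will carry the failure of CEP — and then check, element by element, that these alterations still respect $x\le f(x)$ and $x\le -f(-f(x))$. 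The delicate point is that symmetry couples $f$ with its De~Morgan dual $\B x=-f(-x)$, so each perturbation has to be tested not only directly but also through the composite $\B\D$.

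The two remaining verifications follow the template already used in the extensive case. First, failure of CEP: I would exhibit a finitely generated subalgebra $\gB\subseteq\gA_X$ together with a congruence $\Theta\in\Con(\gB)$ that collapses a designated pair of elements, and argue that no $\Psi\in\Con(\gA_X)$ satisfies $\Theta=\Psi\cap(B\times B)$, because the quasi-equation $x=y\Rightarrow f(x)=f(y)$ valid in every Boolean frame forces any such $\Psi$ to identify, through the non-additive values of $f$, elements of $\gA_X$ whose identification pulls back to strictly more than $\Theta$ on $\gB$. Second, distinctness of the varieties: the relation $R_X$ should be recoverable from $\gA_X$ by a sequence of terms $t_n(x)$ in the language $\{\LAND,-,\0,f\}$ with $\gA_X\models t_n=\1$ exactly when $n\in X$, so that distinct $X$ yield distinct equational theories and hence distinct subvarieties of $\Bao$; this produces continuum many of them.

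The main obstacle I anticipate is precisely the simultaneous fulfilment of (R) and (B) together with the non-additivity needed to collapse CEP. Extensivity and symmetry form a tight pair of constraints — informally $x\le\D x$ and $x\le\B\D x$ — and the CEP-breaking gadget must live exactly where $f$ is forced to be non-additive, which is the region where (B) is hardest to maintain. Concretely, the hard lemma will be to show that the distinguished elements witnessing the failure of the congruence extension property still satisfy $x\le -f(-f(x))$; everything else — normality, unit-preservation, extensivity on the bulk of the algebra, and the coding of $X$ by the terms $t_n$ — I expect to be routine bookkeeping over $\gP(\NN)$.
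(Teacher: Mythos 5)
Your reduction of the corollary to Theorem \ref{thm:main3} via the algebraization correspondence is exactly right, and your observation that the $\star$-operation of Theorem \ref{thm:main2} cannot be invoked because it is not claimed to preserve symmetry matches the paper's own reason for introducing a fresh construction. But your plan for Theorem \ref{thm:main3} has two genuine gaps. First, the central difficulty you yourself flag --- making a normal, extensive, symmetric operator that is non-additive exactly where the CEP-breaking gadget lives --- is announced as ``the hard lemma'' but never resolved, and it really is the crux. Concretely, the template of Construction \ref{constr:1} is incompatible with (B): with the coding clause $\f(-\{0,\dots,n\})=\NN$ for $n\in X$, the element $x=\{0,\dots,n-1\}$ has $\f(x)=\{0,\dots,n\}$, hence $-\f(-\f(x))=-\NN=\emptyset$ and $x\nleq\emptyset$, so symmetry fails at the very elements that code $X$. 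Second, your proposed distinctness device --- terms $t_n$ with $\gA_X\models t_n=\1$ iff $n\in X$ --- is modelled on Theorem \ref{THM:EXT}, where the coding constants are generated by iterating $\f$ on $\0$; once normality $\f(\0)=\0$ is imposed (as it must be for $\bfENBR$) that bootstrap collapses, and you give no reason to expect any $X$-coding elements to be term-definable.

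The paper takes a different route that sidesteps both problems. It applies a product-style construction $(\cdot)^\flat$ (a hybrid of the $\star$ and $\sharp$ constructions) to the algebras $\gW_n\times\gW_n$ built from Miyazaki's reflexive, symmetric wheel frames, so that extensiveness and symmetry are inherited from the input rather than engineered by hand on $\gP(\NN)$. Failure of CEP comes from simplicity of $(\gW_n\times\gW_n)^\flat$ together with the four-element subalgebra on $\{\<\0,\0\>,\<\0,\1\>,\<\1,\0\>,\<\1,\1\>\}$, and distinctness of the continuum many varieties is obtained not from equations on term-definable constants but from relativized positive universal formulas $\psi_e$ preserved by $\mathbf{H}$, $\mathbf{S}$, $\mathbf{P}_U$, combined with J\'onsson's lemma and Miyazaki's theorem that distinct sets of primes yield distinct wheel-frame varieties. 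To salvage your direct approach you would need to supply both missing ingredients; as written, it is a program rather than a proof.
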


The construction $(\cdot)^\flat$ and the subsequent proofs given in subsection \ref{subsec:symext} are combinations of the techniques from subsections \ref{subsec:sc} and \ref{subsec:main}. In particular, $(\cdot)^\flat$ is a construction that preserves symmetry and extensiveness. To obtain Theorem \ref{thm:main3} we make use of the symmetric and extensive wheel frames introduced by \cite{Miyazaki2005} and recalled in subsection \ref{subsec:sc}. 

\section{Constructions and proofs}\label{sec:constructions}

This section contains the constructions and proofs. Our notation is 
standard.

% =====================================================
% =====================================================
% =====================================================
% =====================================================
% =====================================================
% =====================================================
% =====================================================
% =====================================================
% =====================================================

\subsection{Extensive and contractive varieties without the CEP}\label{subsec:extensive}

In this subsection our goal is to prove that there are continuum many \emph{extensive}, and continuum many \emph{contractive} subvarieties of Boolean frames	lacking the congruence extension property (Theorems \ref{THM:EXT} and \ref{THM:CONT}). 

\begin{construction}\label{constr:1}
    For each $X\subseteq\NN$ we define the the operation $\f$ on the
    powerset Boolean algebra $\gP(\NN)$ to obtain the Boolean frame
    $\gA_X = \<\gP(\NN), \f\>$.
    Write \[ E \defeq\{2n:\; n\in \NN\},\quad\text{ and }\qquad 2E \defeq\{2n:\;n\in E\}\,. \]
    For $S\subseteq\NN$ we let
    \[
    \f(S)\defeq\begin{cases}
        \{0,\ldots,n\} &\text{if } S=\{0,\ldots,n-1\}\\
%        \{0, \ldots, n+1\} &\text{if } S=\{0,\ldots,n\}\\
        \NN&\text{if } S=2E\\
        \NN&\text{if } S=-\{0,\ldots,n\} \text{ and } n\in X\\
        S&\text{otherwise}
    \end{cases}
    \]
\end{construction}

\noindent It is straightforward from the definition that $\gA_X$ is extensive, that is, $\gA_X\models x\leq\f(x)$.

\begin{theorem}\label{EXT:2}
    $\gA_X$ does not have the congruence extension property.
\end{theorem}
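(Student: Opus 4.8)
The plan is to exhibit a subalgebra $\gB\subseteq\gA_X$ together with a congruence $\Theta\in\Con(\gB)$ that admits no extension to $\gA_X$. I use throughout that a congruence of a Boolean frame is the same thing as a Boolean ideal $J$ that is compatible with $\f$, in the sense that $a\,\triangle\, b\in J$ implies $\f(a)\,\triangle\,\f(b)\in J$; the associated congruence identifies $a$ and $b$ exactly when $a\,\triangle\, b\in J$ (here $\triangle$ is symmetric difference). I write $J_\Theta$ for the ideal of $\Theta$. The obstruction to CEP will be a single element that is forced into the ideal of every extending congruence, yet lies outside $J_\Theta$.

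The heart of the matter is the following rigidity claim: every nontrivial congruence $\Psi$ of $\gA_X$ has $-2E\in J_\Psi$ (where $-2E=\NN\setminus 2E$). First, since $J_\Psi\neq\{\0\}$ and ideals are downward closed, $J_\Psi$ contains a singleton $\{k\}$. Using $\f(\emptyset)=\{0\}$ and the successor behaviour $\f(\{0,\dots,n-1\})=\{0,\dots,n\}$ one checks that $\{0\}\in J_\Psi$, and then that $\{0\}\in J_\Psi$ propagates along the chain of initial segments to give $\{n\}\in J_\Psi$ for every $n$; thus $J_\Psi$ contains all finite sets. In particular $\{1\}\in J_\Psi$, so $2E$ and $2E\cup\{1\}$ are $\Psi$-related. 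Applying $\f$ and using $\f(2E)=\NN$ while $\f(2E\cup\{1\})=2E\cup\{1\}$ (this set is neither an initial segment, nor equal to $2E$, nor cofinite), we obtain $\NN\,\triangle\,(2E\cup\{1\})=(-2E)\setminus\{1\}\in J_\Psi$, whence $-2E\in J_\Psi$ because $\{1\}\in J_\Psi$.

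With this in hand I choose $\gB$ to be the subalgebra of $\gA_X$ generated by the set $O$ of odd numbers. Since $\f(\emptyset)=\{0\}$, every subalgebra contains all finite and cofinite sets, so $\gB$ consists of the finite sets, the cofinite sets, and the sets differing finitely from $O$ or from its complement; crucially $2E\notin\gB$. Because $2E\notin\gB$, the restriction of $\f$ to $\gB$ has no jump of the form $2E\mapsto\NN$, and a short case check shows that the Fr\'echet ideal $J=\{S\in\gB: S\text{ finite}\}$ is $\f$-compatible on $\gB$; let $\Theta$ be the corresponding congruence. Now suppose $\Psi\in\Con(\gA_X)$ extends $\Theta$, i.e. $J_\Psi\cap B=J_\Theta$. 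As $\Theta$ is nontrivial, so is $\Psi$, and the rigidity claim gives $-2E\in J_\Psi$. Since $O\subseteq -2E$, downward closure yields $O\in J_\Psi$; but $O\in B$ is infinite, so $O\in J_\Psi\cap B=J_\Theta$ contradicts that $J_\Theta$ contains only finite sets. Hence $\Theta$ does not extend, and $\gA_X$ fails CEP.

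The main obstacle is locating the right $\gB$, since the obvious candidates fail: for the finite--cofinite subalgebra (or any subalgebra containing $2E$) every congruence does extend. Indeed the ideal $J^{*}=\{S:S\cap 2E\text{ finite}\}$ defines a congruence of $\gA_X$ on which $\f$ acts trivially modulo finite sets, and it restricts to the Fr\'echet congruence on the finite--cofinite part. The point is therefore to keep $2E$ \emph{out} of $\gB$ (so that Fr\'echet survives as a congruence of $\gB$) while keeping an infinite subset of $-2E$ \emph{inside} $\gB$ (so that the forced collapse of $-2E$ in any extension becomes visible on $\gB$); the odd numbers accomplish both. Verifying $\f$-compatibility of Fr\'echet on $\gB$ and the propagation step in the rigidity claim are the only genuinely computational points.
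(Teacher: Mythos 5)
Your proof is correct, and although it exploits the same structural feature of $\gA_X$ as the paper's proof --- a subalgebra omitting $2E$, so that the jump $\f(2E)=\NN$ is invisible from inside --- the execution is genuinely different. In fact your subalgebra coincides with the paper's: the subalgebra generated by $O$ is the same as the one generated by $E=-O$, namely the sets that are finite, cofinite, or differ finitely from $E$ or $O$. What differs is the congruence and the mechanism of non-extendability. The paper takes the congruence of $\gB$ whose ideal consists of the members of $B$ with only finitely many odd elements; since $E$, and hence $2E\subseteq E$, is then collapsed to $\emptyset$, a single application of $\f$ in any extension gives $\NN = \f(2E)$ congruent to $\f(\emptyset)$, which is itself congruent to $\emptyset$, and the argument closes in three lines. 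You instead take the minimal nontrivial congruence (the Fr\'echet one) and compensate with a rigidity lemma: every nontrivial congruence of $\gA_X$ must contain all finite sets in its ideal (via the successor dynamics $\f(\{0,\dots,n-1\})=\{0,\dots,n\}$) and then $-2E$ as well (by comparing $\f(2E)=\NN$ with $\f(2E\cup\{1\})=2E\cup\{1\}$), so that $O\subseteq -2E$ witnesses the failure of extension. Your route is longer, but it buys more: the rigidity claim constrains \emph{all} congruences of $\gA_X$ rather than just the extensions of one particular $\Theta$, and your closing observation that the Fr\'echet congruence of the finite--cofinite subalgebra \emph{does} extend (via the ideal of sets meeting $2E$ finitely) explains why the choice of $\gB$ is not arbitrary --- a point the paper leaves implicit. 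The computational steps you flag (propagation of $\{0\}$ along initial segments, $\f$-compatibility of the Fr\'echet ideal on $\gB$, and $2E\cup\{1\}$ falling into the default clause of the definition of $\f$) all check out.
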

\begin{proof}
    Let $\gB$ be the subalgebra of $\gA_X$ generated by $E$. Elements of $\gB$
    are all finite or co-finite in $\NN$; or finite or co-finite in $E$. In particular, $2E\notin B$. Observe that $\gB$ has a non-trivial congruence $\sim$ such that $E\sim\emptyset$. In fact, 
    let $\sim$ be the congruence generated by the filter
    \[\label{cong}
        F=\{ X\in B:\; X\text{ contains infinitely many odd numbers}\}.    
    \]
    We claim that $\sim$
    is not a restriction of any congruence of $\gA_X$. Indeed, let $\SIM$ be an extension of $\sim$ to a congruence of $\gA_X$. Then 
    \[
        E\sim\emptyset \quad\Rightarrow\quad 
        E\SIM\emptyset \quad\Rightarrow\quad
        2E\SIM\emptyset \quad\Rightarrow\quad 
        \f(2E)\SIM\f(\emptyset)    
    \]
    As $\f(2E)=\NN$ and $\f(\emptyset)=\{\emptyset\}$ it follows that
    $\NN\SIM\{\emptyset\}$. But $\{\emptyset\}\sim\emptyset$, therefore
    $\NN\SIM\emptyset$, and thus $\SIM$ is a trivial congruence.
\end{proof}

\begin{theorem}\label{EXT:3}\label{THM:EXT}
    For $X\neq Y\subseteq\NN$ we have $\HSP(\gA_X)\neq\HSP(\gA_Y)$.
    In particular, there are continuum many \emph{extensive} varieties of Boolean frames
	lacking the congruence extension property.
\end{theorem}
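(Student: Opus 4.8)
The plan is to separate the generated varieties by exhibiting, for every $n\in\NN$, a single \emph{variable-free} equation $\epsilon_n$ whose validity in $\gA_X$ detects whether $n\in X$. Since by Birkhoff's theorem a variety satisfies exactly the identities of any algebra that generates it, $\HSP(\gA_X)\models\epsilon_n$ iff $\gA_X\models\epsilon_n$; hence $\HSP(\gA_X)=\HSP(\gA_Y)$ would force $\gA_X$ and $\gA_Y$ to validate precisely the same equations. If each $\epsilon_n$ separates $X$ from $Y$, this yields $X=Y$, which is the contrapositive of the first assertion. As there are continuum many $X\subseteq\NN$, we then obtain continuum many pairwise distinct varieties, each extensive (as already observed right after Construction~\ref{constr:1}) and each lacking the CEP by Theorem~\ref{EXT:2}; this is the ``in particular'' clause.

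First I would name the relevant elements by closed terms. The clauses of $\f$ that are \emph{independent of} $X$ produce the ascending chain of finite initial segments: reading the first clause with $n=0$ so that $\f(\0)=\{0\}$, we have $\f(\{0,\dots,k-1\})=\{0,\dots,k\}$, and iterating $\f$ from the bottom gives, in \emph{every} $\gA_X$ uniformly, $\f^{k}(\0)=\{0,\dots,k-1\}$. Consequently the closed term $t_n:=-\,\f^{\,n+1}(\0)$ evaluates in every $\gA_X$ to the cofinite set $-\{0,\dots,n\}$. One checks that at each stage exactly one defining clause fires: a nonempty finite initial segment matches only the first clause (it is finite, so it is neither $2E$ nor a complement of an initial segment), and $-\{0,\dots,n\}$ matches only the complement clause (it omits $0$ and is cofinite, so it is neither an initial segment nor $2E$).

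Now set $\epsilon_n:\ \f\!\left(-\,\f^{\,n+1}(\0)\right)=\1$. By the complement clause, $\f(t_n)=\NN=\1$ when $n\in X$, while $\f(t_n)=t_n\neq\1$ when $n\notin X$; hence $\gA_X\models\epsilon_n$ if and only if $n\in X$. Since $\epsilon_n$ is a ground identity it is preserved by $\mathbf{H}$, $\mathbf{S}$ and $\mathbf{P}$, so $\{\,n:\HSP(\gA_X)\models\epsilon_n\,\}=X$. Thus distinct $X$ give distinct equational theories and therefore distinct varieties, completing the argument.

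The main obstacle, and the point at which the construction of $\f$ does the real work, is promoting ``$\gA_X$ differs from $\gA_Y$'' to ``the \emph{varieties} differ'': an identity must hold under all valuations, whereas the $X$-sensitive behaviour of $\f$ occurs only at one isolated cofinite element. An identity with free variables would have to survive every assignment and would wash the distinction out. The resolution is exactly the use of variable-free terms: $\f$ is engineered to climb the initial segments in an $X$-independent fashion, so that each relevant set $-\{0,\dots,n\}$ receives a canonical closed name $t_n$ and the entire dependence on $X$ is concentrated in the final application of $\f$. Verifying the uniform evaluation $\f^{k}(\0)=\{0,\dots,k-1\}$, i.e.\ that no competing clause ever interferes along the chain, is the only genuinely computational ingredient, and it is routine.
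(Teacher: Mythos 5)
Your proof is correct and follows essentially the same route as the paper: the paper's entire argument is the one-line observation that $\gA_X\models \f(-\f^{\,n}(\0))=\1$ iff $n\in X$, which is exactly your ground identity $\epsilon_n$ (your indexing $\f^{\,n+1}(\0)=\{0,\dots,n\}$ is in fact the more careful bookkeeping). The additional details you supply --- that only one clause of the definition of $\f$ fires along the chain of initial segments, and that variable-free identities are preserved under $\mathbf{HSP}$ --- are just the verification the paper leaves as ``straightforward to check.''
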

\begin{proof}
    Denoting the bottom and top elements respectively by $\0$ and $\1$, 
    it is straightforward to check that $\gA_X\models \f(-\f\nolimits^n(\0))=\1$ if and only if $n\in X$.
\end{proof}

Recall that a Boolean frame is contractive if the identity $\f(x)\leq x$ holds. To construct continuum many contractive subvarieties of Boolean frames lacking the congruence property one can simply turn ``upside down'' the construction. In more detail:

\begin{construction}\label{constr:contractive}
    For each $X\subseteq\NN$ we let
    $\gB_X = \<\gP(\NN), h\>$,
    where 
    \[
    h(S)\defeq\begin{cases}
        -\{0,\dots, n\} &\text{if } S=-\{0,\ldots,n-1\}\\
%        \{0, \ldots, n+1\} &\text{if } S=\{0,\ldots,n\}\\
        \NN&\text{if } S=2E\\
        \emptyset &\text{if } S=\{0,\ldots,n\} \text{ and } n\in X\\
        S&\text{otherwise}
    \end{cases}
    \]
\end{construction}
\noindent Then $\gB_X$ is contractive. That $\gB_X$ does not have the congruence extension property follows from the same reasoning as in Theorem \ref{EXT:2}. Therefore, we have:
\begin{theorem}\label{THM:CONT}
    For $X\neq Y\subseteq\NN$ we have $\HSP(\gB_X)\neq\HSP(\gB_Y)$.
    In particular, there are continuum many \emph{contractive} varieties of Boolean frames
	lacking the congruence extension property.
\end{theorem}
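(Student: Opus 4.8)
The plan is to mirror the proof of Theorem~\ref{EXT:3} for the extensive algebras $\gA_X$, replacing the separating ground identity $\f(-\f^n(\0))=\1$ by its ``upside-down'' analogue. Two of the three ingredients are already in hand: $\gB_X$ is contractive by inspection of $h$, and it fails the congruence extension property by repeating verbatim the argument of Theorem~\ref{EXT:2} (the subalgebra generated by $E$ again omits $2E$, the filter of members containing infinitely many odd numbers generates a congruence collapsing $E$ to $\emptyset$, and since $h(2E)=\NN$ any extension to $\gB_X$ collapses $\NN$ to $\emptyset$). So the whole content of the theorem is the separation $\HSP(\gB_X)\neq\HSP(\gB_Y)$ for $X\neq Y$, together with the observation that there are continuum many subsets $X\subseteq\NN$.

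For the separation I would first compute the orbit of the top element under $h$. A short induction gives $h^{k}(\1)=-\{0,\ldots,k-1\}$ for every $k\geq 0$; the point to check is that at each step the intermediate value $-\{0,\ldots,k-1\}$ is a co-initial-segment, hence falls neither under the $2E$ clause (as $2E$ is neither finite nor cofinite) nor under the $\{0,\ldots,m\}$ clause, so that only the first clause of $h$ fires. Complementing, $-h^{k}(\1)=\{0,\ldots,k-1\}$, and evaluating $h$ once more we have $\{0,\ldots,k-1\}=\{0,\ldots,m\}$ with $m=k-1$, so that $h\bigl(-h^{k}(\1)\bigr)=\0$ precisely when $k-1\in X$, and equals $\{0,\ldots,k-1\}$ otherwise. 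Writing $k=n+1$, this says that the ground identity $\varepsilon_n:\ h\bigl(-h^{\,n+1}(\1)\bigr)=\0$ holds in $\gB_X$ if and only if $n\in X$.

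Finally I would invoke Birkhoff: the equational theory of $\HSP(\gB_X)$ coincides with that of $\gB_X$, so $\HSP(\gB_X)=\HSP(\gB_Y)$ would force $\gB_X$ and $\gB_Y$ to validate exactly the same $\varepsilon_n$, i.e.\ $X=Y$. Hence $X\mapsto\HSP(\gB_X)$ is injective on the continuum $\gP(\NN)$, giving continuum many distinct contractive varieties, each lacking the congruence extension property. I expect no real obstacle here: the only delicate point is the bookkeeping in computing the $h$-orbit of $\1$ and the attendant off-by-one in the index, exactly as in the extensive case.
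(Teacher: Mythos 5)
Your proposal is correct and follows essentially the same route as the paper: the paper's proof is the one-line observation that $\gB_X\models h(-h^n(\1))=\0$ iff $n\in X$, which is exactly your separating identity $\varepsilon_n$ (your more careful bookkeeping places the exponent at $n+1$ rather than $n$, a harmless index shift that does not affect injectivity of $X\mapsto\HSP(\gB_X)$). The contractivity and CEP-failure parts are likewise handled in the paper by deferring to the argument of Theorem~\ref{EXT:2}, just as you do.
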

\begin{proof}
 It is straightforward to check that $\gB_X\models  h(-h^n(\1))=\0$ if and only if $n\in X$.
\end{proof}

% =====================================================
% =====================================================
% =====================================================
% =====================================================
% =====================================================
% =====================================================
% =====================================================
% =====================================================
% =====================================================

\subsection{The subadditive case}\label{subsec:subadd}

 A Boolean frame is called \emph{subadditive} if the identity $f(x\LOR y)\leq f(x)\LOR f(y)$ holds. In this subsection, we construct continuum many subadditive subvarieties of Boolean frames	lacking the congruence extension property (Theorem \ref{THM:SUBADD}).  For $n\in \NN$ let us introduce the notation $\overline{n} = \{n\}$.

\begin{construction}\label{constr:subadd}
	For each $X\subseteq\NN$ we define the  operation $g$ on the
	powerset Boolean algebra $\gP(\NN)$ to obtain the Boolean frame
	$\gC_X = \<\gP(\NN), g\>$.
	Write 
    \begin{align}
        E \defeq\{2n:\; n\in \NN\},\quad&\text{ and }\qquad 2E \defeq\{2n:\;n\in E\}\,, \\
         O \defeq\{2n+1:\; n\in \NN\},\quad&\text{ and }\qquad E^* \defeq\{S\in  [\NN]^\omega: |S\cap O| <\aleph_0, |E\setminus S| < \aleph_0 \}\,.
    \end{align}
  	For $S\subseteq\NN$, we let
	\[
	g(S)\defeq\begin{cases}
		\{0,\ldots,n\} &\text{if } S=\{0,\ldots,n-1\}\\
		%        \{0, \ldots, n+1\} &\text{if } S=\{0,\ldots,n\}\\
		S&\text{if } S\in E^*\\
		-\overline{n}&\text{if } S=-\overline{n} \text{ and } n\in X\\
		S\cup \{\max(S)+ 1\} &\text{if } S\in [\NN]^{<\omega} \text{ and } S\neq \{0,\dots, n-1\} \text{ for all $n\in \NN$} \\
		\NN&\text{otherwise}
	\end{cases}
	\]
\end{construction}

\begin{proposition}
	$\gC_X$ is subadditive for every $X\subseteq \NN$.
\end{proposition}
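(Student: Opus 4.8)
The plan is to verify the defining inequality $g(S\cup T)\subseteq g(S)\cup g(T)$ by a direct case analysis, using that $g$ is extensive. Reading off the definition one sees that $S\subseteq g(S)$ for every $S\subseteq\NN$, hence $S\cup T\subseteq g(S)\cup g(T)$, and so it suffices to prove that the ``new'' elements behave well, i.e. that $g(S\cup T)\setminus(S\cup T)\subseteq g(S)\cup g(T)$. Writing $U=S\cup T$, I would first record a uniform formula on finite sets: for every finite $S$ one has $g(S)=S\cup\{\max(S)+1\}$, under the convention $\max(\emptyset)+1=0$. This merges the first and fourth clauses of the construction and makes the finite case painless.

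Next I would split on the value of $g(U)$. For infinite $U$ the finite clauses are vacuous, so $g(U)$ is either $U$ (exactly when $U\in E^*$ or $U=-\overline{n}$ with $n\in X$) or $\NN$; for finite $U$ it is $U\cup\{\max(U)+1\}$. If $g(U)=U$ there is nothing to add and extensiveness alone gives the inclusion. If $U$ is finite, then $g(U)\setminus U=\{\max(U)+1\}$; assuming (without loss of generality) that $\max(U)=\max(S)$, the finite-set formula yields $\max(U)+1=\max(S)+1\in g(S)$, and the degenerate cases with an empty summand are immediate.

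The remaining, and genuinely delicate, case is $U$ infinite with $g(U)=\NN$, i.e. $U\notin E^*$ and $U\neq-\overline{n}$ for all $n\in X$; here I must show $g(S)\cup g(T)=\NN$ outright. If $g(S)=\NN$ or $g(T)=\NN$ this is clear, so I may assume both images are proper, which forces each of $S,T$ to be finite, to lie in $E^*$, or to be a co-singleton $-\overline{n}$ with $n\in X$ (these are precisely the inputs on which $g$ does not output $\NN$). The key bookkeeping is that $E^*$ absorbs unions with finite sets and with other $E^*$ sets, so any pairing of such ``small'' sets that contains no co-singleton would force $U$ to be finite or to lie in $E^*$ --- contradicting $g(U)=\NN$. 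Hence at least one summand is a co-singleton $-\overline{n}$ ($n\in X$); since $g(U)=\NN$ rules out $U=-\overline{n}$, the other summand must contain the missing point $n$, whence $n\in g(S)\cup g(T)$ while $g(-\overline{n})=-\overline{n}=\NN\setminus\{n\}$, giving $g(S)\cup g(T)=\NN$ as required. I expect this last case --- isolating co-singletons as the only ``small'' inputs whose union can trigger the $\NN$-clause, and checking that their images already cover $\NN$ --- to be the main obstacle; the finite and $g(U)=U$ cases are routine.
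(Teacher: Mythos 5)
Your proof is correct, and it follows the same basic route as the paper --- a direct case analysis resting on the extensiveness of $g$ --- but with a different and arguably cleaner decomposition. The paper splits on the cardinalities of the two summands (both finite / mixed / both infinite) and then sub-splits on whether $-(x\LOR y)$ is a co-atom indexed by $X$ and whether $x\LOR y\in E^*$; you instead split on the value of $g(S\cup T)$, which collapses most of the work into the observations that either $g(U)=U$ (nothing to prove beyond extensiveness) or, for finite $U$, $g(U)\setminus U$ is the single point $\max(U)+1$, supplied by whichever summand attains the maximum. The only substantive case in either argument is $g(S\cup T)=\NN$, and there your bookkeeping is exactly right: the inputs with proper image are precisely the finite sets, the members of $E^*$, and the co-atoms $-\overline{n}$ with $n\in X$; since $[\NN]^{<\omega}\cup E^*$ is closed under the relevant unions, one summand must be such a co-atom and the other must contain the missing point $n$, whence $g(S)\cup g(T)\supseteq(-\overline{n})\cup\{n\}=\NN$. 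This isolation of the co-atom configuration buys a somewhat more transparent treatment of what the paper handles in its subcases 2(b)(ii) and 3, and I see no gap in your argument.
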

\begin{proof}
Pick $x,y\in \gC_X$. We prove the statement by a careful case selection. 
	
\noindent \textsc{Case 1:} Both $x,y\in [\NN]^{<\omega}$. This case is straightforward using the first and fourth lines of the definition of $g$. 

\noindent \textsc{Case 2:} $x\in [\NN]^\omega$ and $y\in [\NN]^{<\omega}$ (or vice versa).
	 \begin{enumerate}
	 	\item[]	\textsc{Subcase (a)}:
	 	 $-(x\LOR y)=\overline{n}$ such that $n\in X$. Then $g(x\LOR y) = x\LOR y$. Since for every $z\in \cP(\NN)$ we have $z\leq g(z)$, we obtain $g(x\LOR y) \leq g(x) \LOR g(y)$. 
	 	\item[]   \textsc{Subcase (b):} $-(x\LOR y)\neq \overline{n}$ for all $n\in X$. Since $x\LOR y\in [\NN]^\omega$ we have:
	 	\begin{enumerate}\itemsep-2pt
	 		\item[(i)] either $x\LOR y \in E^*$,
	 		\item[(ii)] or $x\LOR y \not\in E^*$.
	 	\end{enumerate} 
	 	For (i) we again have $g(x\LOR y) = x\LOR y$, hence similarly as above we get $g(x\LOR y) \leq g(x)\LOR g(y)$.  For (ii), by assumption we must have $x\in [\NN]^\omega$ and $x\not\in E^*$. Now assume that $x\LOR y$ is a co-atom. Hence $-(x\LOR y)=\overline{n}$ for some $n\not\in X$. If $y=0$, then also $-x = \overline{n}$, therefore $g(x) = \NN$, since $n\not\in X$, hence $g(x\LOR 	0)\leq g(x)\LOR g(0)$. If $y\neq 0$, then $x$ is not a co-atom, hence $g(x) = \NN$, therefore $g(x\LOR 	y)\leq g(x)\LOR g(y)$. Finally, if $x\LOR y$ is not a co-atom, then $x$ is also not a co-atom, but $g(x)= \NN$, since $x\not\in E^*$, and $x\in [\NN]^\omega$.
	 \end{enumerate}
\noindent \textsc{Case 3:} Both $x,y\in [\NN]^\omega$. 
\begin{enumerate}
	\item[] \textsc{Subcase (a):} $-(x\LOR y)=\overline{n}$ such that $n\in X$. Then $g(x\LOR y) = x\LOR y$, and at least one, say $x\not\in E^*$. But then $g(x) =\NN$, as $x\in[\NN]^\omega$, hence $g(x\LOR y)\leq g(x)\LOR g(y)$.
		\item[]   \textsc{Subcase (b):} $-(x\LOR y)\neq \overline{n}$ for all $n\in X$. This is done similarly to the corresponding part of \textsc{Case 2}.
\end{enumerate}
\end{proof}

\begin{theorem}\label{subadd_nocep}
	$\gC_X$ does not have the congruence extension property.
\end{theorem}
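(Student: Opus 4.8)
The plan is to reproduce the argument of Theorem~\ref{EXT:2} almost verbatim, the point being that $g(2E)=\NN$ just as $\f(2E)=\NN$, and that the extra clauses of $g$ act harmlessly on the subalgebra that matters. I would take $B$ to be the same carrier as the subalgebra in Theorem~\ref{EXT:2}, namely the set of all $S\subseteq\NN$ for which $S\cap E$ is finite or cofinite in $E$ and $S\cap O$ is finite or cofinite in $O$. The first task is to check that $B$ is closed under $g$, so that $\gB=\langle B,g\rangle$ is genuinely a subalgebra of $\gC_X$: finite sets (initial segments or not) are sent to finite sets, members of $E^*\cap B$ and the co-atoms $-\overline{n}$ are fixed, and every other element of $B$ is infinite and sent to $\NN\in B$. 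As in Theorem~\ref{EXT:2}, $2E\notin B$ because $2E$ is neither finite nor cofinite within $E$.

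Next I would put on $\gB$ the congruence $\sim$ induced by the filter $F=\{S\in B:\ S\text{ has infinitely many odd elements}\}$, so that $S\sim T$ iff $S\triangle T$ contains only finitely many odd numbers. Inside $B$ the phrase ``infinitely many odds'' is the same as ``cofinitely many odds'', so $F$ is closed under intersection and is a proper filter; hence $\sim$ is a nontrivial Boolean congruence with $E\sim\emptyset$ but $O\not\sim\emptyset$. The step that really requires work --- and the one I expect to be the main obstacle, since $g$ has more cases than $\f$ --- is verifying that $\sim$ respects $g$. For this I would establish the single uniform fact that $g(S)\sim S$ for every $S\in B$: initial segments and other finite sets are altered by exactly one point, so the symmetric difference is finite; the fixed points are immediate; and whenever $g(S)=\NN$ the element $S\in B$ must already have cofinitely many odd numbers, so that $S\triangle\NN=-S$ meets $O$ finitely and $S\sim\NN$. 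Granting $g(S)\sim S$ throughout $B$, compatibility follows at once, because $S\sim T$ gives $g(S)\sim S\sim T\sim g(T)$.

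Finally I would run the non-extendability argument without change. Assume some $\SIM\in\Con(\gC_X)$ satisfies $\SIM\cap(B\times B)=\sim$. Then $E\sim\emptyset$ gives $E\SIM\emptyset$, and since $2E\subseteq E$, meeting both sides with $2E$ yields $2E\SIM\emptyset$; applying $g$ gives $\NN=g(2E)\SIM g(\emptyset)$. As $g(\emptyset)$ is finite we have $g(\emptyset)\sim\emptyset$, hence $g(\emptyset)\SIM\emptyset$ and therefore $\NN\SIM\emptyset$, which forces $\SIM$ to be the total relation on $\gC_X$. This contradicts $O\not\sim\emptyset$ together with $\SIM\cap(B\times B)=\sim$, so $\sim$ extends to no congruence of $\gC_X$ and $\gC_X$ fails the CEP. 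Everything outside the computation $g(S)\sim S$ is identical to Theorem~\ref{EXT:2}, so that computation is precisely where the subadditive construction must be shown not to interfere.
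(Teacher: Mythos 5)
Your proposal is correct and follows the same route as the paper, which itself just says ``repeat the arguments from Theorem \ref{EXT:2}'': same subalgebra (the one generated by $E$, whose carrier you describe explicitly), same filter of sets with infinitely many odd elements, and the same killer computation $E\SIM\emptyset\Rightarrow 2E\SIM\emptyset\Rightarrow \NN=g(2E)\SIM g(\emptyset)\sim\emptyset$. The details you supply --- closure of $B$ under $g$, and the uniform fact $g(S)\sim S$ on $B$ giving compatibility --- are exactly the ones the paper elects to skip, and they check out.
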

\begin{proof} Similarly to Theorem \ref{EXT:2} let $\gA$ be the subalgebra of $\gC_X$ generated by $E$. Observe that $\gA$ has a non-trivial congruence $\sim$ such that $E\sim\emptyset$ just as in (\ref{cong}). Now we can simply repeat the arguments from Theorem \ref{EXT:2}, we skip the details.
\end{proof}

\begin{theorem}\label{THM:SUBADD}
	For $X\neq Y\subseteq\NN$ we have $\HSP(\gC_X)\neq\HSP(\gC_Y)$.
	In particular, there are continuum many \emph{subadditive} varieties of Boolean frames
	lacking the congruence extension property.
\end{theorem}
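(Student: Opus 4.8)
The plan is to prove $\HSP(\gC_X) \neq \HSP(\gC_Y)$ for distinct subsets $X, Y \subseteq \NN$ by exhibiting, for each $n$, an equation that holds in $\gC_X$ precisely when $n \in X$, exactly mirroring the strategy used in Theorems \ref{THM:EXT} and \ref{THM:CONT}. The second clause of the theorem then follows at once: since the powerset of $\NN$ has continuum many subsets, the map $X \mapsto \HSP(\gC_X)$ is injective, and each $\gC_X$ fails the congruence extension property by Theorem \ref{subadd_nocep}; hence the varieties $\HSP(\gC_X)$ are continuum many, subadditive (by the preceding proposition), and CEP-free.

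First I would identify a closed term $t_n(\0)$ that, evaluated in $\gC_X$, detects membership of $n$ in $X$ through the third clause of the definition of $g$. The relevant line sets $g(-\overline{n}) = -\overline{n}$ exactly when $n \in X$; when $n \notin X$, the co-atom $-\overline{n}$ is an infinite set not in $E^*$ and not of the form $\{0,\dots,k-1\}$, so $g(-\overline{n}) = \NN$ by the final ``otherwise'' clause. The task is therefore to build the singleton-complement $-\overline{n}$ from $\0$ using $g$ and the Boolean operations, and then to write an equation that is satisfied iff $g(-\overline{n}) = -\overline{n}$, i.e. iff $\overline{n} \nleq g(-\overline{n})$. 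Concretely, $\gC_X \models \overline{n} \LAND g(-\overline{n}) = \0$ should hold iff $n \in X$, because $\overline{n} \LAND (-\overline{n}) = \0$ while $\overline{n} \LAND \NN = \overline{n} \neq \0$.

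Next I would verify that each $\overline{n}$ is itself term-definable from $\0$. The first clause of $g$ acts as a successor on initial segments: $g(\{0,\dots,k-1\}) = \{0,\dots,k\}$, and $g(\emptyset) = g(\{0,\dots,(-1)\})$ should be read as $\{0\}$, so iterating $g$ starting from $\0$ generates all finite initial segments $\{0,\dots,k\}$. Taking the Boolean difference of two consecutive initial segments, $\{0,\dots,n\} \setminus \{0,\dots,n-1\} = \overline{n} = g^{n+1}(\0) \LAND -g^{n}(\0)$, yields the singleton $\overline{n}$ as a closed term, and its complement $-\overline{n}$ is then immediate. Substituting, the equation
\[
    \bigl(g\nolimits^{n+1}(\0)\LAND -g\nolimits^{n}(\0)\bigr)\LAND g\bigl(-(g\nolimits^{n+1}(\0)\LAND -g\nolimits^{n}(\0))\bigr) = \0
\]
holds in $\gC_X$ if and only if $n \in X$.

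The main obstacle I anticipate is book-keeping on the boundary cases of the initial-segment recursion, rather than any conceptual difficulty: I must confirm that the intermediate terms $g^{k}(\0)$ really do land on the clean initial segments $\{0,\dots,k-1\}$ at every step and never get diverted into the $E^*$, co-atom, or finite-non-initial-segment clauses (the fourth clause, which appends $\max(S)+1$, could interfere if a term accidentally produced a non-initial finite set). Since each $g^{k}(\0)$ is by induction exactly $\{0,\dots,k-1\}$, it always matches the first clause, so no diversion occurs; but this inductive invariant is precisely what needs to be stated and checked. Once the term $-\overline{n}$ is confirmed and the trichotomy on $g(-\overline{n})$ (fixed when $n \in X$, mapped to $\NN$ otherwise) is in hand, the equation above separates $\HSP(\gC_X)$ from $\HSP(\gC_Y)$ whenever $X$ and $Y$ differ, completing the proof.
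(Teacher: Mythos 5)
Your proposal is correct and follows essentially the same route as the paper: term-define each singleton $\overline{n}$ from $\0$ via iterates of $g$ (the paper uses $\overline{n+1}=\bigsqcap_{m\leq n}-\overline{m}\LAND g^{n+2}(\emptyset)$, you use the equivalent $g^{n+1}(\0)\LAND -g^{n}(\0)$) and then separate the varieties by a ground equation detecting whether $g(-\overline{n})=-\overline{n}$ or $g(-\overline{n})=\NN$. Your variant equation $\overline{n}\LAND g(-\overline{n})=\0$ and the paper's $g(-\overline{n})=-\overline{n}$ are interchangeable here, and your inductive check that $g^{k}(\0)=\{0,\dots,k-1\}$ is exactly the verification the paper leaves implicit.
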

\begin{proof} Observe that  in each $\gC_X$ every $\overline{n}$ is term definable by:
	\begin{itemize}\itemsep-2pt
		\item $\overline{0} = g(\emptyset)$, 
		\item  $\overline{n+1} = \bigsqcap_{m\leq n} -\overline{m}\LAND g^{n+2}(\emptyset)$.
	\end{itemize}
	Hence, $\gC_X\models g(-\overline{n}) = -\overline{n}$ if and only if $n\in X$.
\end{proof}

% =====================================================
% =====================================================
% =====================================================
% =====================================================
% =====================================================
% =====================================================
% =====================================================
% =====================================================
% =====================================================

\subsection{Monotone, extensive and idempotent}\label{subsec:main}

In this subsection, we prove Theorems \ref{thm:main1} and \ref{thm:main2} which are used to construct normal, extensive, monotone, and idempotent varieties of Boolean frames lacking the congruence extension property. 

\begin{construction}\label{constr:acsillag}
    For an arbitrary Boolean frame $\gA = \<A, \LAND, -, \0, \f\>$
    %such that $\f(\0)=\0$ 
    we construct the Boolean frame $\gA\star$ as follows. 
    \begin{itemize}\itemsep-2pt
        \item The universe of $\gA\star$ is $A\star\defeq A\times A$.
        \item The Boolean reduct of $\gA\star$ is the direct product $\Bl\gA\times\Bl\gA$ of the Boolean reduct of $\gA$.
        \item The operation $\f\!\star:A\times A\to A\times A$ is defined as
        \[
            \f\!\star(\<a,b\>) \defeq\begin{cases}
                \< f(a),  f(b)\> & \text{ if } a=\0\text{ or }b=\0,\\
                \<\1,\1\> & \text{ otherwise.}
            \end{cases}
        \]
    \end{itemize}
    It is clear that $\gA\star$ is a Boolean frame.
\end{construction}

\noindent Recall that a Boolean frame $\gA=\<A,\LAND, -, \0, \f\>$ is \emph{normal} if
$\f(\0) =\0$,  and \emph{unit-preserving} if  $\f(\1)=\1$.

\begin{lemma}\label{lem:simple}
    Let $\gA= \<A, \LAND, -, \0, \f\>$ be a normal Boolean frame
    and suppose that there are $a, b\neq \0$ such that $\f(a)\LAND\f(b)=\0$. 
    Then $\gA\star$ is simple.
\end{lemma}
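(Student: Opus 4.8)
The plan is to compute the congruence lattice of $\gA\star$ directly, via the standard correspondence between Boolean congruences and ideals. Since the Boolean reduct of $\gA\star$ is $\Bl\gA\times\Bl\gA$, every $\Theta\in\Con(\gA\star)$ is determined by its $\0$-class, which is an ideal of $A\times A$; such ideals split as products $I_1\times I_2$ of ideals of $\Bl\gA$, and $\langle x,y\rangle\mathrel\Theta\langle x',y'\rangle$ holds iff the symmetric difference of the two pairs lies in $I_1\times I_2$. To establish simplicity I would show that any congruence $\Theta$ different from the identity must already be the total congruence, i.e.\ that $I_1=I_2=A$. Note $\gA\star$ is nontrivial because $a\neq\0$ forces $\0\neq\1$ in $A$.

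So suppose $\Theta$ is not the identity; then $I_1\times I_2$ contains a nonzero element, and since it is a product we may assume, by the symmetry of the construction in the two coordinates and of the hypothesis in $a,b$ (the coordinate swap is an automorphism of $\gA\star$), that there is $p\neq\0$ with $\langle p,\0\rangle\mathrel\Theta\langle\0,\0\rangle$. The first step forces $I_1=A$: I would join this relation with $\langle\0,b\rangle$ to get $\langle p,b\rangle\mathrel\Theta\langle\0,b\rangle$, then apply $\f\!\star$. The left side jumps to $\langle\1,\1\rangle$ because $p,b\neq\0$, while the right side becomes $\langle\0,\f(b)\rangle$ by normality $\f(\0)=\0$; meeting the resulting relation with $\langle\1,\0\rangle$ yields $\langle\1,\0\rangle\mathrel\Theta\langle\0,\0\rangle$, i.e.\ $\1\in I_1$ and hence $I_1=A$.

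The crux is the second step, showing $I_2=A$, and this is where both witnesses $a,b$ and the disjointness $\f(a)\LAND\f(b)=\0$ enter. Since $I_1=A$, both $\langle a,\0\rangle$ and $\langle b,\0\rangle$ are $\Theta$-related to $\langle\0,\0\rangle$. I would join $\langle a,\0\rangle$ with $\langle\0,b\rangle$, apply $\f\!\star$ (left side $\langle\1,\1\rangle$, right side $\langle\0,\f(b)\rangle$), and meet with $\langle\0,\1\rangle$ to deduce $\langle\0,-\f(b)\rangle\mathrel\Theta\langle\0,\0\rangle$, that is $-\f(b)\in I_2$. Symmetrically, starting from $\langle b,\0\rangle$ and $\langle\0,a\rangle$ gives $-\f(a)\in I_2$. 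As $I_2$ is closed under join and $-\f(a)\LOR-\f(b)=-(\f(a)\LAND\f(b))=-\0=\1$, we get $\1\in I_2$, so $I_2=A$. Thus $\Theta$ is total, and $\gA\star$ is simple.

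I expect the main obstacle to be exactly this last combination. The only nonlinear feature available, the "jump to $\langle\1,\1\rangle$", pins down a collapsed coordinate only up to $\f$ of a single witness, so neither $a$ nor $b$ alone suffices to collapse the second coordinate all the way to $A$; it is precisely the hypothesis $\f(a)\LAND\f(b)=\0$ that lets the two partial collapses $-\f(a)$ and $-\f(b)$ join up to $\1$. I would also double-check the bookkeeping around normality (needed so the untouched coordinate of $\f\!\star$ stays at $\0$ rather than drifting) and verify that the reduction to the single case $\langle p,\0\rangle\in I_1\times I_2$ via the coordinate-swap automorphism is legitimate.
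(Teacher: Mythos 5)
Your proof is correct and rests on essentially the same mechanism as the paper's: collapse a nonzero element in one coordinate, pair it with $a$ and with $b$ to force two jumps to $\<\1,\1\>$ under $\f\!\star$, and combine the results using $\f(a)\LAND\f(b)=\0$. The paper gets $\<\1,\1\>\sim\<\0,\0\>$ in a single step by meeting $\f\!\star(\<x,a\>)$ with $\f\!\star(\<x,b\>)$ directly, so your ideal decomposition, the intermediate stage $I_1=A$, and the coordinate-swap reduction are all sound but not strictly needed.
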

\begin{proof}
    Take any non-trivial congruence $\sim$ of $\gA\star$. Then there is
    $\<x,y\>\in A\star$ such that $\<x,y\>\neq\<\0,\0\>$ but 
    $\<x,y\>\sim\<\0,\0\>$. Without loss of generality we may assume $x\neq\0$, and thus $\<x,\0\>\sim\<\0,\0\>$ (because $\<x,\0\> \leq \<x,y\>$, and elements congruent to zero form an ideal). Let $a,b\in A$ be as in the statement of the lemma,  and  consider the elements 
    $\<x,a\>$ and $\<x,b\>$. Since $\<x,a\>\sim\<\0,a\>$ and $\<x,b\>\sim\<\0,b\>$ we  get
    \[
    \<\1,\1\> \ =\  \f\!\star(\<x,a\>)\ \sim\ \f\!\star(\<\0,a\>)\  =\  \<\0, \f(a)\>,\\
    \<\1,\1\> \ =\  \f\!\star(\<x,b\>)\ \sim\ \f\!\star(\<\0,b\>) \ =\  \<\0, \f(b)\>.
    \]
    But then 
    \[
        \<\1,\1\>\LAND\<\1,\1\> &\ =\  \f\!\star(\<x,a\>)\LAND\f\!\star(\<x,a\>)\\
         &\ \sim\  \f\!\star(\<\0,a\>)\LAND\f\!\star(\<\0,b\>)\\
         &\ = \ \<\0, \f(a)\>\LAND\<\0, \f(b)\> \\
         &\ =\ \<\0, \f(a)\LAND\f(b)\> \\
         &\ =\ \<\0,\0\>.
    \]
    Therefore, $\<\1,\1\>\sim\<\0,\0\>$, implying that $\sim$ is a trivial congruence.
\end{proof}

\begin{lemma}\label{lem:nocep}
    Let $\gA= \<A, \LAND, -, \0, \f\>$ be a normal and unit-preserving Boolean frame
    and suppose that there are $a, b\neq \0$ such that $\f(a)\LAND\f(b)=\0$. 
    Then $\gA\star$ does not have the  congruence extension  property.
\end{lemma}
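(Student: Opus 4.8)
The plan is to exploit the simplicity of $\gA\star$ established in Lemma \ref{lem:simple}. The hypotheses of Lemma \ref{lem:nocep} include everything needed for Lemma \ref{lem:simple} (normality together with the existence of $a,b\neq\0$ satisfying $\f(a)\LAND\f(b)=\0$), so $\gA\star$ is simple: its only congruences are the identity $\Delta$ and the full congruence $\nabla$. Consequently, to witness a failure of the CEP it suffices to produce a single subalgebra $\gB\subseteq\gA\star$ carrying a \emph{proper non-trivial} congruence $\Theta$; for then the only congruences of $\gA\star$ restrict on $B$ to $\Delta\cap(B\times B)=\Delta_B$ or to $\nabla\cap(B\times B)=\nabla_B$, neither of which can equal $\Theta$.

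First I would isolate the right subalgebra. Consider the four ``corner'' elements $B_0=\{\<\0,\0\>,\<\1,\0\>,\<\0,\1\>,\<\1,\1\>\}$. These are closed under the Boolean operations (they form a copy of the four-element Boolean algebra), so the only point to check is closure under $\f\!\star$. This is exactly where \emph{unit-preservation} enters: using $\f(\0)=\0$ and $\f(\1)=\1$, and noting that $\<\1,\0\>$, $\<\0,\1\>$, $\<\0,\0\>$ each have a zero coordinate so that the first clause of the definition applies, one computes $\f\!\star(\<\1,\0\>)=\<\f(\1),\f(\0)\>=\<\1,\0\>$, $\f\!\star(\<\0,\1\>)=\<\0,\1\>$, and $\f\!\star(\<\0,\0\>)=\<\0,\0\>$, while $\f\!\star(\<\1,\1\>)=\<\1,\1\>$ by the ``otherwise'' clause. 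Hence $B_0$ is the universe of a subalgebra $\gB_0$ on which $\f\!\star$ acts as the identity.

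Next I would observe that, because $\f\!\star$ is the identity on $\gB_0$, the congruence condition $x\sim y\Rightarrow\f\!\star(x)\sim\f\!\star(y)$ is automatically satisfied, so \emph{every} Boolean-algebra congruence of $\gB_0$ is already a Boolean-frame congruence. The four-element Boolean algebra has a proper non-trivial congruence $\Theta$, namely the one determined by the ideal $\{\<\0,\0\>,\<\0,\1\>\}$, whose two classes are $\{\<\0,\0\>,\<\0,\1\>\}$ and $\{\<\1,\0\>,\<\1,\1\>\}$. This $\Theta\in\Con(\gB_0)$ is neither $\Delta_{B_0}$ nor $\nabla_{B_0}$, so by the opening paragraph it is not the restriction of any congruence of the simple algebra $\gA\star$, and thus the CEP fails.

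No step here is genuinely hard; the only thing to get right is the short case analysis for $\f\!\star$ on the corners in the second paragraph. The conceptual point worth flagging is that unit-preservation is used in an essential way: it is precisely what keeps $\<\1,\0\>$ and $\<\0,\1\>$ fixed (rather than being sent to $\<\1,\1\>$), which is what makes $\gB_0$ closed and endows it with the identity $\f\!\star$. Without this hypothesis the clean four-element witness would break down, which explains why unit-preservation is added here over the bare normality assumed in Lemma \ref{lem:simple}.
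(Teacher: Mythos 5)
Your proof is correct and follows essentially the same route as the paper: both identify the four-corner subalgebra $\big\{\<\0,\0\>,\<\0,\1\>,\<\1,\0\>,\<\1,\1\>\big\}$ (closed under $\f\!\star$ precisely because of normality and unit-preservation), exhibit a proper non-trivial congruence on it, and conclude via the simplicity of $\gA\star$ from Lemma \ref{lem:simple} that this congruence cannot extend. Your write-up merely spells out in words what the paper conveys with Figure \ref{figure:kisba}, namely that $\f\!\star$ fixes each corner so every Boolean congruence of the subalgebra is a frame congruence.
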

\begin{proof}
    By normality and unitarity, the set
    $\big\{ \<\0,\0\>, \<\0,\1\>, \<\1,\0\>, \<\1,\1\> \big\}$
    is the universe  of a  subalgebra $\gB$ of $\gA\star$. This $\gB$ has 
    two non-trivial congruences as depicted below (the arrows illustrate the action of the operation $\f\!\star$, and the dotted bubbles are the congruence classes).
    \begin{figure}[!ht]
    \begin{center}
        \begin{tikzpicture}[thick,scale=1]
            \draw (0,0) node [below] {} -- %{$\<\0,\0\>$} --
                  (-1,1) node [left] {} --%{$\<\1,\0\>$} --
                  (0,2) node [above] {} --%{$\<\1,\1\>$} --
                  (1,1) node [right] {} -- (0,0); %{$\<\0,\1\>$} -- (0,0);
            \draw [fill] (0,0) circle [radius=.06]; 
            \draw [fill] (-1,1) circle [radius=.06];
            \draw [fill] (0,2) circle [radius=.06]; 
            \draw [fill] (1,1) circle [radius=.06];
            \draw[->,out=200,in=-30,looseness=10,loop,min distance=10mm,thin] (-0.1,0) to (0.1,0); 
            \draw[->,out=-110,in=110,looseness=10,loop,min distance=10mm,thin] (-1,1-0.1) to (-1,1+0.1); 
            \draw[->,out=40,in=150,looseness=5,loop,min distance=10mm,thin] (0.1,2) to (-0.1,2); 
            \draw[->,out=-70,in=70,looseness=5,loop,min distance=10mm,thin] (1,1-0.1) to (1,1+0.1); 

            \draw (0+5,0) node [below] {} -- %{$\<\0,\0\>$} --
                  (-1+5,1) node [left] {} --%{$\<\1,\0\>$} --
                  (0+5,2) node [above] {} --%{$\<\1,\1\>$} --
                  (1+5,1) node [right] {} -- (0+5,0); %{$\<\0,\1\>$} -- (0,0);
            \draw [fill] (0+5,0) circle [radius=.06]; 
            \draw [fill] (-1+5,1) circle [radius=.06];
            \draw [fill] (0+5,2) circle [radius=.06]; 
            \draw [fill] (1+5,1) circle [radius=.06];
            \draw[->,out=200,in=-30,looseness=10,loop,min distance=10mm,thin] (-0.1+5,0) to (0.1+5,0); 
            \draw[->,out=-110,in=110,looseness=10,loop,min distance=10mm,thin] (-1+5,1-0.1) to (-1+5,1+0.1); 
            \draw[->,out=40,in=150,looseness=5,loop,min distance=10mm,thin] (0.1+5,2) to (-0.1+5,2); 
            \draw[->,out=-70,in=70,looseness=5,loop,min distance=10mm,thin] (1+5,1-0.1) to (1+5,1+0.1);

            \begin{scope}[shift = {(0.5, 0.5)} ,rotate=45]
                \draw[dotted] (0, 0) ellipse (1.4 and 0.5);
            \end{scope}

            \begin{scope}[shift = {(0.5-1, 0.5+1)} ,rotate=45]
                \draw[dotted] (0, 0) ellipse (1.4 and 0.5);
            \end{scope}

            \begin{scope}[shift = {(0.5+4, 0.5)} ,rotate=135]
                \draw[dotted] (0, 0) ellipse (1.4 and 0.5);
            \end{scope}

            \begin{scope}[shift = {(0.5+5, 0.5+1)} ,rotate=135]
                \draw[dotted] (0, 0) ellipse (1.4 and 0.5);
            \end{scope}

\iffalse
% Megnevezesek
\node at (-11.5,-1.5) {$\Delta$};
\node at (-8.5,-1.5) {$\theta_1$};
\node at (-5.5,-1.5) {$\theta_2$};
\node at (-2.5,-1.5) {$\theta_3$};
\node at (0.5,-1.5) {$\nabla$};
\fi
\end{tikzpicture}
\caption{Non-trivial congruences of $\gB$.}
\label{figure:kisba}
    \end{center}
\end{figure}
    By Lemma \ref{lem:simple}, $\gA\star$ is simple, consequently, the non-trivial congruences of $\gB$  extend to trivial congruences of  $\gA\star$.    
\end{proof}

\noindent For a term $t(\vec x)$ and a variable $y$ not occurring in $\vec x$ we define
the relativized term $t^y(\vec x)$ by induction as follows.
\[ \0^y\defeq \0,\quad
x^y \defeq x\LAND y, \quad
(t_1\LAND t_2)^y\defeq t_1^y\LAND t_2^y,\quad
(-t)^y\defeq -(t^y)\LAND y,\quad
\f(t)^y\defeq \f(t^y) \label{relterm}
\]
Similarly, for an identity $t_1=t_2$ we let $(t_1=t_2)^y \defeq t_1^y=t_2^y$.\\

\noindent Observe that in a \emph{normal} Boolean frame $\gA$ we have
\[
\gA\models t_1=t_2 \quad \text{ if and only if }\quad 
\gA\star\models t_1^{\<\1,\0\>} = t_2^{\<\1,\0\>}\,,    \label{eq:erelativized}
\]
for any identity $t_1=t_2$ (and similarly  with  $\<\0,\1\>$ in place of $\<\1,\0\>$).\medskip

\def\UAR{\mathbf{U}}
\begin{theorem}\label{thm:main1}
    For a variety $\VAR$ of Boolean frames let us write
    \[ \VAR\star = \mathbf{HSP}\big\{ \gA\star:\; \gA\in \VAR \big\}.  \]
    Let $\VAR$ and $\UAR$ be different varieties of \emph{normal} and
    \emph{unit-preserving} Boolean frames. Then $\VAR\star$ and $\UAR\star$
    are different varieties lacking the congruence extension property.
\end{theorem}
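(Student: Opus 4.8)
The statement bundles two claims: that $\VAR\star$ and $\UAR\star$ each lack the congruence extension property, and that the assignment $\VAR\mapsto\VAR\star$ is injective. The first is a quick consequence of the preceding lemmas. Assuming $\VAR$ is nontrivial, it contains the two-element frame $\mathbf 2$ (normality and unit-preservation determine $\f$ on $\{\0,\1\}$, and any nontrivial member has $\0\neq\1$), hence also the square $\mathbf 2\times\mathbf 2$. In $\mathbf 2\times\mathbf 2$ the operation $\f$ is the identity, so the two complementary atoms $a=\<\1,\0\>$ and $b=\<\0,\1\>$ satisfy $a,b\neq\0$ and $\f(a)\LAND\f(b)=\0$. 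Thus Lemma \ref{lem:nocep} applies to $\gA=\mathbf 2\times\mathbf 2$ and shows that $(\mathbf 2\times\mathbf 2)\star\in\VAR\star$ fails the congruence extension property; since a variety has that property only when all of its members do, $\VAR\star$ lacks it, and symmetrically for $\UAR\star$. (The one-element variety is the sole degenerate exception, which I set aside.)

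For the injectivity, the plan is to recover $\VAR$ from $\VAR\star$ using the relativization of \eqref{eq:erelativized}. The structural fact I would lean on is that for normal, unit-preserving $\gA$ the element $\<\1,\0\>$ is a fixed point of $\f\!\star$ whose principal ideal is $\f\!\star$-closed, so the relative structure of $\gA\star$ below $\<\1,\0\>$ is again a Boolean frame, canonically isomorphic to $\gA$; the same holds for the complementary fixed point $\<\0,\1\>$. Starting from $\VAR\neq\UAR$, I would fix an identity $\epsilon$ with $\VAR\models\epsilon$ and $\UAR\not\models\epsilon$. By \eqref{eq:erelativized}, for every frame $\gA$ one has $\gA\models\epsilon$ if and only if $\gA\star\models\epsilon^{\<\1,\0\>}$, so $\epsilon$ is detectable inside the doubled frames through its relativization at $\<\1,\0\>$. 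The aim is to upgrade this pointwise equivalence into a genuine separation by proving that membership in $\UAR\star$ descends, after relativizing below $\<\1,\0\>$, to membership in $\UAR$; concretely, that $\gA\star\in\UAR\star$ forces $\gA\in\UAR$. Granting this, any $\gA\in\VAR\setminus\UAR$ produces $\gA\star\in\VAR\star\setminus\UAR\star$, and the two star-varieties differ.

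The main obstacle is precisely this passage from a single algebra to the whole variety. The tempting shortcut—using the relativized identity $\epsilon^y$ with a fresh variable $y$ as the separating equation—fails: $\epsilon^y$ is not valid throughout $\VAR\star$, because at $y=\1$ it collapses to $\epsilon$ itself, and a doubled frame typically violates $\epsilon$ even when its base does (e.g. $(\mathbf 2\times\mathbf 2)\star\not\models\f(x)=x$ although $\mathbf 2\times\mathbf 2\models\f(x)=x$). So the recovery cannot be encapsulated in one identity and must instead track the distinguished complementary fixed points through an $\HSP$-decomposition: writing $\gA\star$ as a homomorphic image of a subalgebra of a product $\prod_i\gC_i\star$ with $\gC_i\in\UAR$, I would follow the images of $\<\1,\0\>$ and $\<\0,\1\>$—still complementary fixed points with $\f\!\star$-closed ideals—and reconstruct $\gA$ as a relative subreduct of the corresponding relativizations $\gC_i\star\!\restriction(\cdot)$. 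The delicate point, where I expect the real work to lie, is controlling the factors in which the image of $\<\1,\0\>$ degenerates (for instance to a top element, whose relativization returns an entire $\gC_i\star\notin\UAR$ rather than a member of $\UAR$), so that the reconstructed $\gA$ genuinely lands in $\UAR$ and not merely in $\UAR\star$.
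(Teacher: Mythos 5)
Your first half is correct and is essentially the paper's own argument: the paper applies Lemma \ref{lem:nocep} to $(\gA\times\gA)\star$ for an arbitrary $\gA\in\VAR$, taking $a=\<\1,\0\>$ and $b=\<\0,\1\>$; your use of $\mathbf{2}\times\mathbf{2}$ is the same argument in its minimal instance, and your explicit handling of the trivial variety is a harmless refinement.

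The second half has a genuine gap. You correctly diagnose why the naive separating identity $e^y$ fails, but the strategy you substitute for it --- showing that $\gA\star\in\UAR\star$ forces $\gA\in\UAR$ by chasing the fixed points $\<\1,\0\>$ and $\<\0,\1\>$ through an $\HSP$-decomposition --- is exactly where you stop, and the difficulty you flag is not a loose end but the whole problem: in a subalgebra of a product $\prod_i\gC_i\star$ an element may project to $\<\1,\0\>$ in some coordinates and to $\<\0,\1\>$, $\<\0,\0\>$, $\<\1,\1\>$ or something else in others, and passing to a quotient can destroy the distinguished fixed points entirely, so there is no canonical ``relative subreduct'' to extract and no reason the reconstructed algebra lands in $\UAR$. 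The paper closes this gap with a different device. It packages the case analysis on $y$ into a single \emph{positive universal} first-order sentence $\phi_e$ (formula \eqref{eq:formula}): unlike $e^y$, this sentence holds in \emph{every} $\gA\star$ with $\gA\in\VAR$, because the disjuncts $\f(y)=\1$ and $\f(-y)=\1$ absorb all values of $y$ other than the two distinguished fixed points, and at those two values it reduces to the relativization \eqref{eq:erelativized}. It then invokes J\'onsson's lemma --- the subdirectly irreducible members of $\VAR\star$ lie in $\mathbf{HSP}_U$ of the generators, and $\mathbf{H}$, $\mathbf{S}$, $\mathbf{P}_U$ preserve positive universal sentences --- so every subdirectly irreducible member of $\VAR\star$ satisfies $\phi_e$. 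The final ingredient, which your outline never uses, is Lemma \ref{lem:simple}: $\gB\star$ is \emph{simple}, hence subdirectly irreducible, hence it would have to satisfy $\phi_e$ if it belonged to $\VAR\star$; since $\gB\star\nmodels\phi_e$, it does not, and $\VAR\star\neq\UAR\star$. Without some such mechanism (a preserved first-order sentence plus J\'onsson's lemma, and the simplicity of the $\star$-algebras to place the witness among the subdirectly irreducibles), the injectivity claim remains unproved in your proposal.
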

\begin{proof}
    Take any $\gA\in \VAR$. Then $\gA\times\gA\in \VAR$ as well. In $\gA\times
    \gA$ there are non-zero elements $a$ and $b$ such that $\f(a)\LAND \f(b)=\0$:  take, for instance, $a = \<\1,\0\>$  and $b=\<\0,\1\>$. By Lemma
    \ref{lem:nocep},  the algebra $(\gA\times\gA)\star$ does not  have the
    congruence extension property. Clearly, $(\gA\times\gA)\star\in\VAR\star$, 
    therefore the  variety $\VAR\star$ lacks the congruence extension property as well. 

    Next, take any normal and unit-preserving Boolean frame $\gA$ and
    observe that in $\gA\star$ the only elements $a$ such that 
    $\f\!\star(a) = a$ \emph{and} $\f\!\star(-a)=-a$
    are $a = \<\0,\0\>$, $\<\1,\0\>$, $\<\0,\1\>$, or $\<\1,\1\>$.
    Further, if $\f\!\star(a)\neq\<\1,\1\>$ \emph{and} 
    $\f\!\star(-a)\neq\<\1,\1\>$, then $a$ must be either $\<\1,\0\>$
    or $\<\0,\1\>$. 
    It follows that for any identity $e$, $\gA\models \forall \vec x\ e(\vec x)$
    if and only if
    \[ \gA\star\models \forall \vec x\forall y\big( 
    \f(y)=\1 \;\lor\; f(-y)=\1 \;\lor\; ( \f(y)=y\land \f(-y)=-y\land
    e^y(\vec x)) \big),\label{eq:formula} \]
    cf. \eqref{eq:erelativized}. Note that the formula in \eqref{eq:formula}
    (which we will denote by $\phi_e(\vec x, y)$) is a positive universal formula.

    As $\VAR$ and $\UAR$ are different varieties, there is an identity $e(\vec x)$
    such that 
    \[\VAR\models\forall x\  e(\vec x)\qquad \text{ while }\qquad  \UAR\nmodels\forall x \ e(\vec x)\,, \label{eq:anemb} \]
    or the other way around -- let us suppose \eqref{eq:anemb}. Let 
    $\gB\in\UAR$ be such that $\gB\nmodels\forall x \ e(\vec x)$.
    Using the argument above we obtain
    \[(\forall\gA\in\VAR)\ \gA\star\models \phi_e(\vec x, y)\qquad \text{ while }\qquad  \gB\star\nmodels\phi_e(\vec x,y)\,. \]
    In congruence distributive varieties, J\'onsson's lemma \cite{Jonsson1967} states that the subdirectly irreducible members $(\mathbf{HSP}(K))_{SI}$ of the variety $\mathbf{HSP}(K)$ belong to $\mathbf{HSP}_U(K)$. In particular,
    \[ (\VAR\star)_{SI} \ \subseteq\ \mathbf{HSP}_U\big(\{\gA\star:\; \gA\in\VAR\}\big).\label{eqanemb2}
    \]
    The operations $\mathbf{H}$, $\mathbf{S}$ and $\mathbf{P}_U$ preserve
    positive universal formulas. Therefore, by \eqref{eqanemb2}, every member of 
    $(\VAR\star)_{SI}$ must satisfy $\phi_e(\vec x,y)$, while on the other
    hand, $\gB\star\nmodels\phi_e(\vec x,y)$. But $\gB\star$ is simple, by Lemma \ref{lem:simple}, hence $\gB\star$ cannot belong to $\VAR\star$. This proves
    that $\VAR\star\neq\UAR\star$.
\end{proof}

\begin{remark}\label{remark:tab}
    In Theorem \ref{thm:main1} if $\VAR$ is generated by the class $\mathsf{K}$ of algebras, then $\VAR\star$ can be taken to be 
    \[
        \VAR\star = \HSP\{\gA\star:\; \gA\in \mathsf{K} \}\,.
    \]
    The construction $\gA\mapsto\gA\star$ preserves the finiteness of the algebra. Thus, if $\VAR$ is generated by a finite set of finite algebras, then so is $\VAR\star$. In particular, if $\VAR$ is tabular (generated by a single finite algebra), then so is $\VAR\star$.
\end{remark}

\begin{theorem}\label{thm:main2}
    For each variety $\VAR$ of \emph{normal} and \emph{unit-preserving} Boolean frames
    there exists a variety $\VAR\star$ of normal and unit-preserving Boolean frames
    such that the following properties hold.
    \begin{itemize}\itemsep-2pt
        \item $\VAR\star$ lacks the congruence extension property.
        \item If $\VAR\neq\UAR$, then $\VAR\star\neq\UAR\star$.
        \item The construction $\VAR\mapsto\VAR\star$ preserves the properties of being (not) extensive, (not) monotone, (not) idempotent.\footnote{E.g. if $\VAR$ is monotone but not extensive, then so is $\VAR\star$.}
    \end{itemize}
\end{theorem}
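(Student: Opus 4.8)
The plan is to invoke Theorem~\ref{thm:main1} for the first two bullets and to reduce the third bullet to a pair of transfer arguments. Theorem~\ref{thm:main1} already gives that $\VAR\star$ lacks the congruence extension property and that $\VAR\mapsto\VAR\star$ is injective on varieties of normal, unit-preserving frames, so the reading $\VAR\star=\HSP\{\gA\star:\gA\in\VAR\}$ is fixed. First I would check that the construction does not leave the class: if $\gA$ is normal then $\f\!\star(\<\0,\0\>)=\<\f(\0),\f(\0)\>=\<\0,\0\>$, and $\f\!\star(\<\1,\1\>)=\<\1,\1\>$ in either branch, so each generator $\gA\star$, and hence $\VAR\star$, is normal and unit-preserving. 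It then remains to prove, for $P$ ranging over extensivity, monotonicity and idempotency, that $\VAR$ has $P$ iff $\VAR\star$ has $P$. Each of these is equational (monotonicity through its equivalence with $\f(x)\LOR\f(y)\leq\f(x\LOR y)$), so since equations pass through $\HSP$ it suffices to verify a \emph{preservation} direction on the generators $\gA\star$ together with a \emph{reflection} direction exhibiting a failure inside some $\gA\star$.

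For preservation I would run a case split on whether a coordinate of the argument is $\0$. When $a=\0$ or $b=\0$ the operation is coordinatewise, $\f\!\star(\<a,b\>)=\<\f(a),\f(b)\>$, so extensivity and monotonicity descend directly from $\gA$; when both coordinates are nonzero, $\f\!\star(\<a,b\>)=\<\1,\1\>$ is the top and both properties hold trivially. (For monotonicity one also notes that if the larger argument has a zero coordinate then so does the smaller one, reducing that case to coordinatewise monotonicity.) Idempotency is the only step requiring normality: when $a=\0$ we have $\f\!\star(\<\0,b\>)=\<\0,\f(b)\>$ because $\f(\0)=\0$, so the first coordinate stays $\0$ and a second application yields $\<\0,\f(\f(b))\>=\<\0,\f(b)\>$ by idempotency of $\gA$; when both coordinates are nonzero, $\f\!\star(\<a,b\>)=\<\1,\1\>$ is a fixed point.

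For reflection I would use the map $\iota\colon a\mapsto\<a,\0\>$, which is an order embedding of $\gA$ into $\gA\star$ and, by normality, commutes with the operations: $\f\!\star(\iota(a))=\<\f(a),\f(\0)\>=\iota(\f(a))$. Since extensivity, monotonicity and idempotency are expressible using only $\leq$, $\f$ and $=$, any counterexample in $\gA$ transports verbatim through $\iota$: a failure $a\not\leq\f(a)$ becomes $\iota(a)\not\leq\f\!\star(\iota(a))$; a failure of monotonicity $a\leq a'$, $\f(a)\not\leq\f(a')$ becomes $\iota(a)\leq\iota(a')$ with $\f\!\star(\iota(a))\not\leq\f\!\star(\iota(a'))$; and $\f(\f(a))\neq\f(a)$ becomes $\f\!\star(\f\!\star(\iota(a)))\neq\f\!\star(\iota(a))$. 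Thus if $\VAR$ fails $P$, some $\gA\in\VAR$ witnesses it, the generator $\gA\star\in\VAR\star$ fails the corresponding (quasi-)equation, and $\VAR\star$ fails $P$ as well.

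Putting the two directions together gives the three equivalences, and because they are proved independently, all the mixed combinations asserted in the footnote (such as monotone but not extensive) follow at once. The only delicate point is the idempotency preservation step, where one must ensure that iterating $\f\!\star$ does not revive a coordinate that has been set to $\0$; this is precisely where normality of $\gA$ enters, and is the single place the hypotheses of the theorem are genuinely needed beyond what Theorem~\ref{thm:main1} already supplies.
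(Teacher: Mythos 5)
Your proposal is correct and follows the same route as the paper: items (i) and (ii) are quoted from Theorem~\ref{thm:main1}, and item (iii) is the case analysis on whether a coordinate is $\0$ that the paper dismisses as routine, which you carry out in full (including the correct observation that normality is what keeps the zero coordinate fixed for idempotency and makes $a\mapsto\<a,\0\>$ commute with the operation for the reflection direction). No gaps.
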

\begin{proof}
    Items (i) and (ii) are immediate from Theorem \ref{thm:main1}. As for (iii) one only needs to check the definition of $\gA\star$ in Construction \ref{constr:acsillag}. It is routine to show that $\gA\star$ is (not) extensive / (not) monotone / (not) idempotent if $\gA$ is so.
\end{proof}

% =====================================================
% =====================================================
% =====================================================
% =====================================================
% =====================================================
% =====================================================
% =====================================================
% =====================================================
% =====================================================

\subsection{The semi-complemented case}\label{subsec:sc}

In this subsection, we construct continuum many normal, unit-preserving, and semi-complemented varieties of Boolean frames that lack the congruence extension property. 

\begin{construction}
    Let $\gA = \<A, \LAND, -, \0, \f\>$ be a \emph{normal} and \emph{unit-preserving} Boolean frame with at least $8$ elements. We construct the Boolean
    frame $\gA^\sharp$ as follows.
    \begin{itemize}\itemsep-2pt
        \item The universe of $\gA^\sharp$ is $A^\sharp\defeq A\times A$.
        \item The Boolean reduct of $\gA^\sharp$ is the direct product
        $\Bl\gA\times\Bl\gA$ of the Boolean reduct of $\gA$.
        \item The operation $\f^\sharp: A^\sharp\to A^\sharp$ is defined as
        \begin{align}
            \f\nolimits^\sharp: \begin{cases}
                \<a, \0\> &\mapsto\quad \< f(a), \0\> \\
                \<\0, a\> &\mapsto\quad \< \0, f(a) \>\\
                \<a, \1\> &\mapsto\quad \<-f(-a), \1\> \\
                \<\1, a\> &\mapsto\quad \<\1, -f(-a)\> 
            \end{cases}
        \end{align}
        and for every other $\<a,b\>$ not listed above, i.e. when $\0<a<\1$ and
        $\0<b<\1$, $\f^\sharp$ maps
        $\<a,b\>$ either to $\<\0,\0\>$ or to $\<\1,\1\>$ subject to two
        conditions:
        \begin{enumerate}[(i)]\itemsep-2pt
            \item $\f^\sharp(\<-a,-b\>) = -\f^\sharp(\<a,b\>)$, and
            \item for every $\0<x<\1$ there are $\0<y_1, y_2<\1$ such that
            \[
                \f\nolimits^\sharp(\<x,y_1\>) = \<\0,\0\>,\quad\text{ and }\quad
                \f\nolimits^\sharp(\<x,y_2\>) = \<\1,\1\>,
            \]
            and symmetrically, for every $\0<y<\1$ there are $\0<x_1, x_2<\1$
            such that 
            \[
                \f\nolimits^\sharp(\<x_1,y\>) = \<\0,\0\>,\quad\text{ and }\quad
                \f\nolimits^\sharp(\<x_2,y\>) = \<\1,\1\>,
            \]
        \end{enumerate}
        As $A$ is large enough (has at least $8$ elements), conditions (i) and (ii) can be satisfied.
    \end{itemize}
    It is easy to check that $\gA^\sharp$ is \emph{semi-complemented}, normal, and unit-preserving. To simplify notation we write $f^\sharp(a,b)$ in place of $f^\sharp(\<a,b\>)$.
\end{construction}

\begin{lemma}\label{lem:sc1}
    Assume that $\gA$ is normal and unit-preserving and for every $a\neq\0$
    there is a natural number $k$ such that $f^k(a)=\1$.
    Then 
    $\gA^\sharp$ is simple and does not have the congruence extension property.
\end{lemma}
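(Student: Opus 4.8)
The plan is to establish simplicity of $\gA^\sharp$ first; the failure of the congruence extension property is then obtained exactly as in Lemma~\ref{lem:nocep}. So let $\sim$ be a non-trivial congruence of $\gA^\sharp$; I want to force $\<\1,\1\>\sim\<\0,\0\>$. The elements $\sim$-related to $\<\0,\0\>$ form an ideal of the Boolean reduct $\Bl\gA\times\Bl\gA$ that is, moreover, closed under $f^\sharp$ (since $z\sim\<\0,\0\>$ gives $f^\sharp(z)\sim f^\sharp(\<\0,\0\>)=\<\0,\0\>$ by normality). Choosing $\<x,y\>\neq\<\0,\0\>$ with $\<x,y\>\sim\<\0,\0\>$ and meeting with $\<\1,\0\>$ (resp.\ $\<\0,\1\>$), I may assume $x\neq\0$ and $\<x,\0\>\sim\<\0,\0\>$; the case $y\neq\0$ is analogous, as the defining clauses of $f^\sharp$ and the existence assertions of condition~(ii) are symmetric in the two coordinates.

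The heart of the proof is to spread this collapse across both coordinates using the hypothesis that $f^k(a)=\1$ for some $k$ whenever $a\neq\0$. Applying $f^\sharp$ to $\<x,\0\>$ repeatedly gives $\<f^k(x),\0\>\sim\<\0,\0\>$ for all $k$, and since $x\neq\0$ some $f^k(x)=\1$; hence $\<\1,\0\>\sim\<\0,\0\>$, and meeting with arbitrary elements yields $\<c,\0\>\sim\<\0,\0\>$ for every $c\in A$. To move into the second coordinate I fix some $\0<x'<\1$ (possible as $|A|\ge 8$) and use condition~(ii) to pick $\0<y'<\1$ with $f^\sharp(\<x',y'\>)=\<\0,\0\>$. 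Since $\<x',\0\>\sim\<\0,\0\>$ we get $\<x',y'\>\sim\<\0,y'\>$, and applying $f^\sharp$ gives $\<\0,\0\>\sim\<\0,f(y')\>$. Iterating $f^\sharp$ on the right-hand side produces $\<\0,f^m(y')\>\sim\<\0,\0\>$ for all $m$, and as $y'\neq\0$ some $f^m(y')=\1$, whence $\<\0,\1\>\sim\<\0,\0\>$. Joining $\<\1,\0\>$ and $\<\0,\1\>$ gives $\<\1,\1\>\sim\<\0,\0\>$, so $\sim$ is full and $\gA^\sharp$ is simple.

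For the congruence extension property, normality and unit-preservation make $f^\sharp$ fix each of $\<\0,\0\>,\<\0,\1\>,\<\1,\0\>,\<\1,\1\>$, so these four elements form a subalgebra $\gB$ whose two non-trivial congruences are precisely those of Figure~\ref{figure:kisba}; since $\gA^\sharp$ is simple, neither is the restriction of a congruence of $\gA^\sharp$, and so CEP fails. The step I expect to be delicate is the passage into the second coordinate: the natural attempt, pushing forward a witness that $f^\sharp$ sends to $\<\1,\1\>$, stalls precisely when $f(y')=\1$. Routing instead through a witness sent to $\<\0,\0\>$ is what makes the argument go, because there the image $\<\0,f(y')\>$ keeps a nonzero second coordinate available, so the hypothesis $f^m(y')=\1$ can be reapplied; this is the one place where condition~(ii) and the assumption on $f$ are both indispensable.
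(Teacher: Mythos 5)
Your proof is correct and follows essentially the same route as the paper's: both arguments exploit that the congruence class of $\<\0,\0\>$ is an ideal closed under $\f^\sharp$, use the witnesses from condition (ii) together with the hypothesis that iterating $f$ on a nonzero element reaches $\1$ to force $\<\0,\1\>$ and $\<\1,\0\>$ into that class, and then derive the failure of CEP from the four-element subalgebra $\{\<\0,\0\>,\<\0,\1\>,\<\1,\0\>,\<\1,\1\>\}$ exactly as in Lemma \ref{lem:nocep}. The only (harmless) divergence is that the paper applies condition (ii) directly to the element $x$ with both witnesses $y_1,y_2$, whereas you first drive the first coordinate up to $\1$ and then pass to a fresh $x'$ strictly between $\0$ and $\1$ before invoking the $\<\0,\0\>$-witness, which in fact handles the edge case $x=\1$ a little more explicitly.
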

\begin{proof}
    Let $\sim$ be not the least congruence of $\gA^\sharp$, and assume that there are
    $x,y\in A$ such that $\<x,y\>\neq\<\0,\0\>$ but $\<x,y\>\sim\<\0,\0\>$. 
    Then $x\neq 0$ or $y\neq 0$, let us say $x\neq 0$. Then
    $\<x,\0\>\sim\<\0,\0\>$ as well. By construction of $\gA^\sharp$ there
    are $\0<y_1, y_2<\1$ such that
    \[
      \f\nolimits^\sharp(x,y_1) = \<\0,\0\>,\quad\text{ and }\quad
      \f\nolimits^\sharp(x,y_2) = \<\1,\1\>.
    \]
    As $\<0,y_1\>\sim\<x,y_1\>$ and $\<0,y_2\>\sim\<x,y_2\>$, there is $k\in\omega$ such that
    \begin{align}
    \<\0,\0\>\ =\ (\f\nolimits^\sharp)^k(x,y_1)&\ \sim\ (\f\nolimits^\sharp)^k(0,y_1)\ =\ \<0,\1\>,
    \quad\text{and }\\
    \<\1,\1\>\ =\ (\f\nolimits^\sharp)^k(x,y_2)&\ \sim\ (\f\nolimits^\sharp)^k(0,y_2)\  = \ \<0,\1\>.
    \end{align}
    Thus $\<\0,\0\>\sim\<\1,\1\>$, proving that $\sim$ is the largest congruence. Consequently, $\gA^\sharp$ is simple. 

    As for the lack of the congruence extension property, notice that
    \[
        \big\{ \<\0,\0\>, \<\0,\1\>, \<\1,\0\>, \<\1,\1\> \big\}
    \]
    is the universe of a subalgebra of $\gA^\sharp$ which has four congruences
    (cf. Figure \ref{figure:kisba}). The non-trivial congruences of this
    subalgebra extend to trivial congruences of $\gA^\sharp$.
\end{proof}

\noindent Recall from \eqref{relterm} that for a term $t(\vec x)$ and a variable $y$ not occurring in $\vec x$, we defined the relativized term $t^y(\vec x)$, and similarly for an identity $t_1=t_2$ we let $(t_1=t_2)^y \defeq t_1^y=t_2^y$.

\begin{lemma}\label{lem:sc2}
    Assume that $\gA$ is normal and unit-preserving, and there is a natural number
    $k$ such that for every $a\neq \0$ we have $\f^k(a)=\1$. Let $e(\vec x)$ be an identity. The following are equivalent:
    \begin{enumerate}[(i)]\itemsep-1pt
        \item $\gA\models \forall\vec x\;e(\vec x)$.
        \item $\gA^\sharp\models \forall \vec x\forall y\big(
            \f^k(y)=\1\ \lor\  \f^k(y)=\0\ \lor\  e^{\f^k(y)}(\vec x)
        \big)$.
    \end{enumerate}
\end{lemma}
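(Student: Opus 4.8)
The plan is to reduce both implications to the single relativization principle \eqref{eq:erelativized}, after pinning down exactly where the iterated operation $(f^\sharp)^k$ sends each element of $\gA^\sharp = A\times A$. Although \eqref{eq:erelativized} is stated for $\gA\star$, it holds verbatim for $\gA^\sharp$: on $A\times\{\0\}$ and on $\{\0\}\times A$ the operation $f^\sharp$ agrees with $f^\star$ (in both cases $\<a,\0\>\mapsto\<f(a),\0\>$ and $\<\0,b\>\mapsto\<\0,f(b)\>$), so relativizing at $\<\1,\0\>$ (resp.\ $\<\0,\1\>$) yields a faithful copy of $\gA$, and the same computation gives $\gA\models t_1=t_2$ iff $\gA^\sharp\models t_1^{\<\1,\0\>}=t_2^{\<\1,\0\>}$ (and likewise for $\<\0,\1\>$).

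First I would establish the central computational fact: for every $v\in A^\sharp$ and every $k\geq 1$,
\[ (f^\sharp)^k(v)\in\big\{\<\0,\0\>,\<\1,\0\>,\<\0,\1\>,\<\1,\1\>\big\}, \]
so that the conditions ``$(f^\sharp)^k(v)\neq\<\1,\1\>$ and $(f^\sharp)^k(v)\neq\<\0,\0\>$'' and ``$(f^\sharp)^k(v)\in\{\<\1,\0\>,\<\0,\1\>\}$'' coincide. This is a case analysis following the definition of $f^\sharp$. If $v=\<a,\0\>$, the map keeps the second coordinate at $\0$ and acts as $f$ on the first, so $(f^\sharp)^k(v)=\<f^k(a),\0\>$, which is $\<\1,\0\>$ for $a\neq\0$ (hypothesis $f^k(a)=\1$) and $\<\0,\0\>$ for $a=\0$ (normality); the case $v=\<\0,b\>$ is symmetric. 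If $v=\<a,\1\>$, the map fixes the second coordinate at $\1$ and acts by $g=-f(-\cdot)$ on the first; an easy induction gives $g^k(a)=-f^k(-a)$, which is $\0$ for $a\neq\1$ (apply the hypothesis to $-a\neq\0$) and $\1$ for $a=\1$ (unit-preservation), so $(f^\sharp)^k(v)\in\{\<\0,\1\>,\<\1,\1\>\}$; the case $v=\<\1,b\>$ is symmetric. Finally, when $\0<a<\1$ and $\0<b<\1$, already $f^\sharp(v)\in\{\<\0,\0\>,\<\1,\1\>\}$, and both are fixed points. This exhausts $A\times A$ and, in passing, shows that $\<\1,\0\>$ and $\<\0,\1\>$ are fixed points of $f^\sharp$.

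With this in hand the equivalence is short. For (i)$\Rightarrow$(ii), fix an assignment $y\mapsto v$, $\vec x\mapsto\vec w$. If $(f^\sharp)^k(v)\in\{\<\1,\1\>,\<\0,\0\>\}$, the first or second disjunct of the formula holds. Otherwise $(f^\sharp)^k(v)\in\{\<\1,\0\>,\<\0,\1\>\}$; since the relativized identity $e^{f^k(y)}$ depends on $y$ only through the value of the relativizing term, its truth at this assignment equals that of $e$ relativized at $\<\1,\0\>$ or at $\<\0,\1\>$ with $\vec x\mapsto\vec w$, which holds because $\gA\models e$ by the relativization principle and its $\<\0,\1\>$-analogue. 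For (ii)$\Rightarrow$(i), instantiate $y$ at the fixed point $\<\1,\0\>$: then $(f^\sharp)^k(\<\1,\0\>)=\<\1,\0\>$ rules out the first two disjuncts, forcing $e^{\<\1,\0\>}(\vec x)$ to hold for all $\vec x$, i.e.\ $\gA^\sharp\models e^{\<\1,\0\>}$, whence $\gA\models e$ by \eqref{eq:erelativized}.

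The main obstacle is the computation of $(f^\sharp)^k$: one must verify that \emph{every} orbit is absorbed into a corner after exactly $k$ steps, and this is precisely where the uniform bound $k$ and the hypothesis ``$f^k(a)=\1$ for all $a\neq\0$'' (together with normality and unit-preservation to identify the fixed corners) are essential. The $\<a,\1\>$ and $\<\1,b\>$ cases need the small algebraic observation that $g^k(a)=-f^k(-a)$ for $g=-f(-\cdot)$, so that the hypothesis applied to $-a$ drives the orbit to $\<\0,\1\>$. Everything else is bookkeeping, including the transfer of \eqref{eq:erelativized} from $\gA\star$ to $\gA^\sharp$, which rests only on the coincidence of the two operations on $A\times\{\0\}$ and $\{\0\}\times A$.
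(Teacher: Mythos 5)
Your proposal is correct and follows essentially the same route as the paper's (much terser) proof: establish that $(\f^\sharp)^k$ sends every element to one of the four corners $\<\0,\0\>,\<\1,\0\>,\<\0,\1\>,\<\1,\1\>$, each of which is attained, and then invoke the relativization equivalence $\gA\models t_1=t_2$ iff $\gA^\sharp\models t_1^{\<\1,\0\>}=t_2^{\<\1,\0\>}$; you merely supply the case analysis and the transfer of \eqref{eq:erelativized} to $\gA^\sharp$ that the paper leaves implicit. (Two trivial nits: the value $g^k(\1)=-f^k(\0)=\1$ uses normality rather than unit-preservation, and the absorption into a corner holds for the specific $k$ of the hypothesis, not for every $k\geq 1$ as your displayed claim literally states.)
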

\begin{proof}
    For every element $y$ of $\gA^\sharp$, $(\f\nolimits^\sharp)^k(y)$ is one
    of $\<\0,\0\>$, $\<\0,\1\>$, $\<\1,\0\>$, or $\<\1,\1\>$, and each of these are values of $(\f^\sharp)^k(y)$ for some $y$. To complete the proof observe that 
    \[
        \gA\models t_1=t_2 \quad \text{ if and only if }\quad 
        \gA^\sharp\models t_1^{\<\1,\0\>} = t_2^{\<\1,\0\>}\,,    
    \]
    for any identity $t_1=t_2$ (and similarly with $\<\0,\1\>$ in place of $\<\1,\0\>$).
\end{proof}

\noindent We recall the definition of a wheel frame from \cite{Miyazaki2005}. For $n\geq 5$ the wheel frame $\cW_n = \<W_n, R_n\>$ is the frame
\begin{align}
    W_n&\defeq\{ 0,\ldots, n-1 \}\cup\{h\} \\
    R_n&\defeq\{ \<x,y\>: x, y<n, |x-y|\leq 1\ (\text{mod } n)\}\cup\{ \<h,h\>, \<h,x\>, \<x,h\>: x<n\}.
\end{align}
    \begin{figure}[!ht]
    \begin{center}
        \begin{tikzpicture}[thick,scale=1]
            \graph [nodes={draw, circle, fill}, clockwise, radius=1cm, empty nodes]
  {subgraph A[at={(0,-1)}] -- subgraph C_n[n=9]};
        \end{tikzpicture}
    \end{center}
    \caption{The frame $\cW_9$.}
    \end{figure}
For $n\geq 5$ let $\gW_n$ be the complex algebra of $\cW_n$. Then $\gW_n$ is a normal, unit-preserving Boolean frame, having at least $8$ elements, and for $a\in \gW_n$, $a\neq \0$ we have $\f(\f(a))=\1$. Let $Prim$ be the set of prime numbers larger than $4$. According to \cite[Theorem 21]{Miyazaki2005}, for different $X, Y\subseteq Prim$, we have
\[
    \HSP\{ \gW_n:\; n\in X\} \neq \HSP\{ \gW_n:\; n\in Y\}\,.
\]
For $X\subseteq Prim$ let us write
\[
    \VAR(X)&\defeq\HSP\{ \gW_n:\; n\in X\} \\
    \VAR^\sharp(X) &= \HSP\{\gW_n^\sharp:\; n\in X\}.\label{eq:ssdd}
\]
By Lemma \ref{lem:sc1}, each $\gW_n^\sharp$ is simple and does not have the congruence extension property. Therefore, $\VAR^\sharp(X)$ is a variety of normal, unit-preserving, and semi-complemented Boolean frames lacking the congruence extension property. \medskip

\begin{theorem}\label{THM:COMP}
    For distinct $X,Y\subseteq Prim$ we have 
    $\VAR^\sharp(X)\neq \VAR^\sharp(Y)$. In particular, 
    there are continuum many varieties of normal, unit-preserving, and semi-complemented Boolean frames lacking the congruence extension property. 
\end{theorem}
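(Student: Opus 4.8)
The plan is to reproduce, almost verbatim, the Jónsson's-lemma argument from the proof of Theorem~\ref{thm:main1}, now using Lemma~\ref{lem:sc2} in place of the relativization equivalence~\eqref{eq:formula}. The feature that makes this possible is that the wheel frames carry a \emph{uniform} iteration bound: for every $n$ and every nonzero $a\in\gW_n$ one has $\f(\f(a))=\1$, so the constant $k=2$ serves all of them at once, which is exactly the hypothesis of Lemma~\ref{lem:sc2}. Consequently, to each identity $e(\vec x)$ I can attach the single first-order sentence $\phi_e \defeq \forall\vec x\,\forall y\big(\f\nolimits^2(y)=\1 \lor \f\nolimits^2(y)=\0 \lor e^{\f\nolimits^2(y)}(\vec x)\big)$, and the point to record is that $\phi_e$, being a universally quantified disjunction of equations, is a \emph{positive universal} formula and is therefore preserved by each of $\mathbf{H}$, $\mathbf{S}$, and $\mathbf{P}_U$.

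First I would invoke Miyazaki's separation theorem \cite{Miyazaki2005}: since $X\neq Y$, the base varieties $\VAR(X)$ and $\VAR(Y)$ differ, so some identity $e(\vec x)$ holds throughout one but not the other; without loss of generality $\VAR(X)\models\forall\vec x\,e(\vec x)$ while $\VAR(Y)\nmodels\forall\vec x\,e(\vec x)$, and I fix $m\in Y$ with $\gW_m\nmodels\forall\vec x\,e(\vec x)$. Applying Lemma~\ref{lem:sc2} to each $\gW_n$ (for $n\in X$) gives $\gW_n^\sharp\models\phi_e$, while applying its contrapositive to $\gW_m$ gives $\gW_m^\sharp\nmodels\phi_e$.

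The separation is then closed off exactly as in Theorem~\ref{thm:main1}. Varieties of Boolean frames are congruence distributive (the Boolean reduct supplies the lattice terms), so Jónsson's lemma \cite{Jonsson1967} places every subdirectly irreducible member of $\VAR^\sharp(X)=\HSP\{\gW_n^\sharp:\,n\in X\}$ inside $\mathbf{HSP}_U\{\gW_n^\sharp:\,n\in X\}$; since every generator satisfies the positive universal $\phi_e$ and $\mathbf{H},\mathbf{S},\mathbf{P}_U$ preserve such formulas, every subdirectly irreducible member of $\VAR^\sharp(X)$ satisfies $\phi_e$. But $\gW_m^\sharp$ is simple by Lemma~\ref{lem:sc1}, hence subdirectly irreducible, and it refutes $\phi_e$; therefore $\gW_m^\sharp\notin\VAR^\sharp(X)$, whereas $\gW_m^\sharp\in\VAR^\sharp(Y)$ trivially, giving $\VAR^\sharp(X)\neq\VAR^\sharp(Y)$. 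Because $Prim$ is infinite it has continuum many subsets, and each $X$ yields --- by Lemma~\ref{lem:sc1} --- a normal, unit-preserving, semi-complemented variety lacking the congruence extension property, so the ``in particular'' clause follows. The only step demanding genuine care, and the one I would write out in full, is the verification that $\phi_e$ is positive universal and that Lemma~\ref{lem:sc2} legitimately applies to every $\gW_n$ (normality, unit-preservation, and the uniform bound $k=2$); once that is pinned down, the model-theoretic heart of the proof is a transcription of the Theorem~\ref{thm:main1} argument with no new idea required.
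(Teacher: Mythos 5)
Your proposal is correct and follows essentially the same route as the paper's own proof: the same identity $e$ separating $\VAR(X)$ from $\VAR(Y)$, the same positive universal sentence built from $\f(\f(y))$ via Lemma \ref{lem:sc2} with the uniform bound $k=2$, the same appeal to J\'onsson's lemma together with preservation of positive universal formulas under $\mathbf{H}$, $\mathbf{S}$, $\mathbf{P}_U$, and the same use of simplicity of $\gW_m^\sharp$ from Lemma \ref{lem:sc1} to exclude it from $\VAR^\sharp(X)$. No gaps; the details you flag for careful writing (positivity of $\phi_e$ and applicability of Lemma \ref{lem:sc2} to every wheel frame) are exactly the ones the paper also relies on.
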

\begin{proof}
    $X\neq Y$ implies $\VAR(X)\neq\VAR(Y)$, and thus there is an identity 
    $e(\vec x)$ such that 
    \[\VAR(X)\models\forall x\  e(\vec x)\qquad \text{ while }\qquad  \VAR(Y)\nmodels\forall x \ e(\vec x)\,, \label{eq:anemb2} \]
    or the other way around -- let us suppose \eqref{eq:anemb2}. 
    Let $n\in Y$ be such that $\gW_n\nmodels\forall x \ e(\vec x)$.
    Let $\psi_e(\vec x, y)$ be the formula 
    \[
        \forall \vec x\forall y\big(
            \f(\f(y))=\1\ \lor\  \f(\f(y))=\0\ \lor\  e^{\f(\f(y))}(\vec x)\big)\,.
    \]
    By Lemma \ref{lem:sc2} for each $k\in\omega$ we have
    \[
        \gW_k\models \forall\vec x\;e(\vec x)\quad\text{ iff }\quad
        \gW_k^\sharp\models \psi_e(\vec x, y)\,.
    \]
    We then have 
    \[
        (\forall k\in X)\ \gW_k^\sharp\models \psi_e(\vec x, y)\qquad \text{ while }\qquad  \gW_n^\sharp\nmodels\psi_e(\vec x,y)\,. \label{eqanemb44}
    \]
    By J\'onsson's lemma \cite{Jonsson1967}, 
    \[ (\VAR^\sharp(X))_{SI} \ \subseteq\ \mathbf{HSP}_U\big(\{\gW_k^\sharp:\; k\in X\}\big).
    \]
    As operations $\mathbf{H}$, $\mathbf{S}$ and $\mathbf{P}_U$ preserve
    positive universal formulas, and $\psi_e(\vec x, y)$ is a positive universal formula, by \eqref{eqanemb44} it follows that every member of 
    $(\VAR^\sharp(X))_{SI}$ must satisfy $\psi_e(\vec x,y)$. By lemma \ref{lem:sc1}, 
    $\gW_n^\sharp$ is simple, and by \eqref{eqanemb44}, $\gW_n^\sharp\nmodels\psi_e(\vec x,y)$, consequently 
    $\gW_n\notin (\VAR^\sharp(X))_{SI}$. This proves that $\VAR^\sharp(X)\neq \VAR^\sharp(Y)$.
\end{proof}

% ==========================================================
% ==========================================================
% ==========================================================

\subsection{The symmetric and extensive case}\label{subsec:symext}
We say that a Boolean frame $\gA$ is symmetric if $x\leq -f(-f(x))$ holds for $\gA$. In this subsection, we construct continuum many normal, unit-preserving, and symmetric varieties of Boolean frames that lack the congruence extension property. The construction is done by mixing the techniques from the $(\cdot)^*$ and the $(\cdot)^\sharp$ constructions from Subsections \ref{subsec:main},\ref{subsec:sc}.

\begin{construction}
	For an arbitrary Boolean frame $\gA = \<A, \LAND, -, \0, \f\>$
	%such that $\f(\0)=\0$ 
	we construct the Boolean frame $\gA^\flat$ as follows. 
	\begin{itemize}\itemsep-2pt
		\item The universe of $\gA^\flat$ is $A^\flat\defeq A\times A$.
		\item The Boolean reduct of $\gA^\flat$ is the direct product $\Bl\gA\times\Bl\gA$ of the Boolean reduct of $\gA$.
		\item The operation $\f\!^\flat:A\times A\to A\times A$ is defined as
	\begin{align}
		\f\nolimits^\flat: \begin{cases}
			\<a, \0\> &\mapsto\quad \< f(a), \0\> \\
			\<\0, a\> &\mapsto\quad \< \0, f(a) \>\\
			\<a, \1\> &\mapsto\quad \<f(a), \1\> \\
			\<\1, a\> &\mapsto\quad \<\1, f(a)\>\\
			 	\<b, c\> &\mapsto\quad \<\1, 1\>\\
		\end{cases}
	\end{align}
for $0<b,c<1$.
	\end{itemize}
\end{construction}
\noindent It is routine to check that $\gA^\flat$ is symmetric if $\gA$ is and $\gA^\flat$ is extensive if $\gA$ is.  By ($\flat$) let us abbreviate the following conditions on a Boolean frame $\gA$:
\begin{itemize}\itemsep-2pt
	\item $\gA$ is normal and unit-preserving,
	\item there are $0<a,b\in A$ such that $f(a)\LAND f(b) = 0$,
 \item $f(c)\neq 0$ for all $0\neq c\in A$.
\end{itemize}

\begin{lemma}\label{lem:simpl_flat}
	Let $\gA= \<A, \LAND, -, \0, \f\>$ be a Boolean frame satisfying  $(\flat)$.  Then $\gA^\flat$ is simple and does not have the congruence extension property.
\end{lemma}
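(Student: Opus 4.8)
The plan is to follow the template of Lemmas \ref{lem:simple}, \ref{lem:nocep}, and \ref{lem:sc1}: first show that $\gA^\flat$ is \emph{simple}, and then read off the failure of CEP from a four-element subalgebra on which $\f\nolimits^\flat$ acts as the identity. The one genuinely new feature to manage is that $\f\nolimits^\flat$ is coordinatewise on the ``boundary'' pairs $\<a,\0\>$, $\<\0,a\>$, $\<a,\1\>$, $\<\1,a\>$, but collapses every strictly interior pair $\<b,c\>$ (with $\0<b,c<\1$) to $\<\1,\1\>$; the argument must be routed through these interior pairs. As preliminary book-keeping I would record two consequences of $(\flat)$. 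Since $\gA$ is unit-preserving and $f(c)\neq\0$ for all $c\neq\0$, the witnesses $a,b$ with $f(a)\LAND f(b)=\0$ must satisfy $\0<a,b<\1$: if, say, $a=\1$ then $f(a)=\1$, whence $f(b)=\0$, forcing $b=\0$, a contradiction. In particular $A$ has an element strictly between $\0$ and $\1$. Moreover, by normality and unitarity each of $\<\0,\0\>$, $\<\0,\1\>$, $\<\1,\0\>$, $\<\1,\1\>$ is a fixed point of $\f\nolimits^\flat$, so $\gB\defeq\big\{\<\0,\0\>,\<\0,\1\>,\<\1,\0\>,\<\1,\1\>\big\}$ is the universe of a subalgebra of $\gA^\flat$.

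For simplicity I would take a nontrivial congruence $\sim$ and a witness $\<x,y\>\neq\<\0,\0\>$ with $\<x,y\>\sim\<\0,\0\>$; the two coordinates being symmetric, I may assume $x\neq\0$, and meeting with $\<\1,\0\>$ gives $\<x,\0\>\sim\<\0,\0\>$. The key manoeuvre is to descend into the interior: if $x=\1$, then meeting $\<\1,\0\>$ with $\<a,\1\>$ yields $\<a,\0\>\sim\<\0,\0\>$ with $\0<a<\1$, so after relabelling I may assume $\0<x<\1$. Now, with $a,b$ as above, joining $\<x,\0\>$ with $\<\0,a\>$ and with $\<\0,b\>$ gives $\<x,a\>\sim\<\0,a\>$ and $\<x,b\>\sim\<\0,b\>$. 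Since $\<x,a\>$ and $\<x,b\>$ are interior pairs, applying $\f\nolimits^\flat$ gives $\<\1,\1\>=\f\nolimits^\flat(\<x,a\>)\sim\f\nolimits^\flat(\<\0,a\>)=\<\0,f(a)\>$ and likewise $\<\1,\1\>\sim\<\0,f(b)\>$; meeting these two congruences yields $\<\1,\1\>\sim\<\0,f(a)\LAND f(b)\>=\<\0,\0\>$. Hence $\sim$ is the full congruence, so $\gA^\flat$ is simple.

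For the failure of CEP I would argue exactly as in Lemmas \ref{lem:nocep} and \ref{lem:sc1}. The subalgebra $\gB$ is a copy of the four-element Boolean algebra on which $\f\nolimits^\flat$ is the identity, so it carries the two proper nontrivial congruences of Figure \ref{figure:kisba} (any Boolean congruence is respected since the modal operation fixes all four points). As $\gA^\flat$ is simple, $\Con(\gA^\flat)=\{\Delta,\nabla\}$, and these restrict on $B\times B$ to $\Delta_B$ and $\nabla_B$ respectively, neither of which is a proper nontrivial congruence of $\gB$. Thus the nontrivial congruences of $\gB$ do not extend to $\gA^\flat$, and CEP fails.

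The main obstacle is the simplicity argument, specifically pushing a zero-collapsing witness into the interior region where $\f\nolimits^\flat$ is constant equal to $\<\1,\1\>$. This is exactly where the clauses of $(\flat)$ interact: the clause $f(c)\neq\0$ forces the chosen witnesses $a,b$ to be interior, while the existence of an interior element lets me replace a top-level witness $x=\1$ by an interior one. Once both coordinates of the relevant arguments lie strictly between $\0$ and $\1$, the collapse to $\<\1,\1\>$ is automatic and the remainder is the same meet-computation as in Lemma \ref{lem:simple}.
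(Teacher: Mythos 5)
Your proof is correct and follows essentially the same route as the paper's: reduce a nontrivial congruence to a witness $\<x,\0\>\sim\<\0,\0\>$, use the interior witnesses $a,b$ from $(\flat)$ to force $\<\1,\1\>\sim\<\0,\f(a)\LAND \f(b)\>=\<\0,\0\>$, and then read off the failure of CEP from the four-element subalgebra $\{\<\0,\0\>,\<\0,\1\>,\<\1,\0\>,\<\1,\1\>\}$, whose nontrivial congruences cannot extend to the simple algebra $\gA^\flat$. Your explicit descent step handling the case $x=\1$ (replacing it by an element strictly between $\0$ and $\1$ via the ideal of elements congruent to $\<\0,\0\>$) is needed for $\f^\flat(\<x,a\>)=\<\1,\1\>$ to hold and is a detail the paper's own write-up glosses over, so it is a worthwhile addition.
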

\begin{proof}
	The proof follows the pattern of Lemma \ref{lem:simple}. 
    Take any non-trivial congruence $\sim$ of $\gA^\flat$. Then there is
	$\<x,y\>\in A^\flat$ such that $\<x,y\>\neq\<\0,\0\>$ but 
	$\<x,y\>\sim\<\0,\0\>$. Without loss of generality we may assume $x\neq\0$, and thus $\<x,\0\>\sim\<\0,\0\>$. Let $a,b\in A$ be from the condition ($\flat$). Observe that $a,  b<1$. Otherwise, if, say $a=1$, then $f(a)\LAND f(b)= 1\LAND f(b) =0$ contradicting to $f(b)\neq 0$. Considering 
	$\<x,a\>$ and $\<x,b\>$, since $\<x,a\>\sim\<\0,a\>$ and $\<x,b\>\sim\<\0,b\>$ we  get
	\[
	\<\1,\1\> \ =\  \f\!^\flat(\<x,a\>)\ \sim\ \f\!^\flat(\<\0,a\>)\  =\  \<\0, \f(a)\>,\\
	\<\1,\1\> \ =\  \f\!^\flat(\<x,b\>)\ \sim\ \f\!^\flat(\<\0,b\>) \ =\  \<\0, \f(b)\>.
	\]
	The rest is similar to that of  Lemma \ref{lem:simple} and Lemma \ref{lem:nocep}.
\end{proof}

\noindent Just as in  \eqref{eq:erelativized}, for any \emph{normal} Boolean frame $\gA$ we have
\[
\gA\models t_1=t_2 \quad \text{ if and only if }\quad 
\gA^\flat\models t_1^{\<\1,\0\>} = t_2^{\<\1,\0\>}\,,    \label{eq:flat_rel}
\]
for any identity $t_1=t_2$, since $f^*(\<1,0\>)=f^\flat(\<1,0\>)$ (and similarly  with  $\<\0,\1\>$ in place of $\<\1,\0\>$).\medskip

\noindent Similarly to \eqref{eq:ssdd} for $X\subseteq Prim$ we let
\[
\VAR^\flat(X) &= \HSP\{(\gW_n\times \gW_n)^\flat  :\; n\in X\}.
\]
 Since the complex algebra $(\cW_n\uplus \cW_n)^+\cong \gW_n\times \gW_n $, it is easy to see that $(\flat)$ holds for $\gW_n\times \gW_n$.    Hence, by  Lemma \ref{lem:simpl_flat}  $(\gW_n\times \gW_n)^\flat$ is  simple and does not have the congruence extension property. Therefore, $\VAR^\flat(X)$ is a variety of normal, unit-preserving, extensive, and symmetric Boolean frames lacking the congruence extension property. Combining the  techniques from Theorem \ref{THM:COMP} and Theorem \ref{thm:main1} we have the following. \medskip

\begin{theorem}\label{thm:main3}
	For distinct $X,Y\subseteq Prim$ we have 
	$\VAR^\flat(X)\neq \VAR^\flat(Y)$. In particular, 
	there are continuum many varieties of normal, unit-preserving, extensive, and symmetric Boolean frames lacking the congruence extension property. 
\end{theorem}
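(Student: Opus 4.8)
The plan is to reduce Theorem \ref{thm:main3} to the same machinery already developed for the semi-complemented case (Theorem \ref{THM:COMP}), adapting it to the $(\cdot)^\flat$ construction. We are given distinct sets $X,Y\subseteq Prim$ and want $\VAR^\flat(X)\neq\VAR^\flat(Y)$. As in the proof of Theorem \ref{THM:COMP}, since $X\neq Y$ the result of \cite{Miyazaki2005} gives $\VAR(X)\neq\VAR(Y)$, so there is an identity $e(\vec x)$ and some $n\in Y$ with $\VAR(X)\models\forall\vec x\,e(\vec x)$ but $\gW_n\nmodels\forall\vec x\,e(\vec x)$ (or symmetrically). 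Since validity of identities is preserved under direct products, we also have $\gW_k\times\gW_k\models\forall\vec x\,e(\vec x)$ for all $k\in X$ while $\gW_n\times\gW_n\nmodels\forall\vec x\,e(\vec x)$.

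The crux is to transfer this separating identity across the $(\cdot)^\flat$ construction, exactly as Lemma \ref{lem:sc2} does for $(\cdot)^\sharp$. First I would isolate, inside each $\gA^\flat$ with $\gA=\gW_k\times\gW_k$, a definable ``flag'' element that carries a copy of $\gA$ on which the relativized identity $e^y$ reflects validity in $\gA$. By \eqref{eq:flat_rel}, relativizing to $\<\1,\0\>$ recovers satisfaction in $\gA$; so the goal is to produce a positive universal formula $\psi_e(\vec x,y)$ asserting that, unless $y$ is forced to one of the four Boolean corner elements $\<\0,\0\>,\<\0,\1\>,\<\1,\0\>,\<\1,\1\>$, the relativized identity $e^y(\vec x)$ holds. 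Concretely, because in $\gA=\gW_k\times\gW_k$ every nonzero element iterates under $f$ to reach a corner in a bounded number of steps (the wheel algebras satisfy $f(f(a))=\1$ for $a\neq\0$, and one checks the analogous bounded-reachability for $\gA^\flat$), one can use a fixed iterate $(\f^\flat)^m(y)$ to project $y$ onto a corner. The formula then reads schematically
\[
\psi_e(\vec x, y)\ \defeq\ \forall\vec x\,\forall y\big(
(\f^\flat)^m(y)=\<\1,\1\>\ \lor\ (\f^\flat)^m(y)=\<\0,\0\>\ \lor\ e^{(\f^\flat)^m(y)}(\vec x)\big),
\]
with the two ``pure'' corners $\<\1,\0\>,\<\0,\1\>$ providing the genuine relativized copies via \eqref{eq:flat_rel}. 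I would then prove the analogue of Lemma \ref{lem:sc2}: $\gA\models\forall\vec x\,e(\vec x)$ iff $\gA^\flat\models\psi_e(\vec x,y)$, checking that each corner is attained as $(\f^\flat)^m(y)$ for a suitable $y$ and that the relativization correctly copies the identity onto the surviving corners.

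With this equivalence in hand the argument closes mechanically as in Theorem \ref{THM:COMP}. We obtain
\[
(\forall k\in X)\ (\gW_k\times\gW_k)^\flat\models\psi_e(\vec x,y)
\qquad\text{while}\qquad
(\gW_n\times\gW_n)^\flat\nmodels\psi_e(\vec x,y).
\]
Since Boolean frames form a congruence distributive variety, J\'onsson's lemma \cite{Jonsson1967} gives $(\VAR^\flat(X))_{SI}\subseteq\HSP_U\{(\gW_k\times\gW_k)^\flat:k\in X\}$, and as $\mathbf{H}$, $\mathbf{S}$, $\mathbf{P}_U$ all preserve positive universal formulas, every subdirectly irreducible member of $\VAR^\flat(X)$ satisfies $\psi_e$. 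But $(\gW_n\times\gW_n)^\flat$ is simple by Lemma \ref{lem:simpl_flat} (one verifies that $(\flat)$ holds for $\gW_n\times\gW_n$, using $(\cW_n\uplus\cW_n)^+\cong\gW_n\times\gW_n$), hence subdirectly irreducible, yet it fails $\psi_e$; therefore it cannot lie in $\VAR^\flat(X)$, giving $\VAR^\flat(X)\neq\VAR^\flat(Y)$. The ``in particular'' clause follows since $Prim$ is infinite, yielding continuum many distinct such varieties, each lacking the congruence extension property by Lemma \ref{lem:simpl_flat}.

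The main obstacle I anticipate is verifying the $(\cdot)^\flat$-analogue of Lemma \ref{lem:sc2}, specifically the bounded-iteration claim that $(\f^\flat)^m(y)$ lands in a Boolean corner for a fixed $m$ independent of $y$, together with confirming that all four corners are realized and that the relativized identity is faithfully transported by \eqref{eq:flat_rel}. Unlike the $(\cdot)^\sharp$ case where $f(f(a))=\1$ on nonzero elements gave a clean single value, here the mixed region $\{\<b,c\>:0<b,c<\1\}$ maps to $\<\1,\1\>$ immediately while the ``axis'' elements evolve according to $f$ on the wheel; care is needed to check that iterating $\f^\flat$ genuinely collapses every $y$ into the corner sublattice within a uniform bound, which is where the specific structure of $\gW_k$ (in particular $f^2(a)=\1$ for $a\neq\0$) must be invoked.
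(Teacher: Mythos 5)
Your overall architecture is exactly the paper's: separate $X$ and $Y$ by an identity $e$ via Miyazaki's theorem, transfer $e$ to a positive universal sentence $\psi_e$ about $(\gW_k\times\gW_k)^\flat$, and close with J\'onsson's lemma, preservation of positive universal formulas under $\mathbf{H}$, $\mathbf{S}$, $\mathbf{P}_U$, and simplicity of $(\gW_n\times\gW_n)^\flat$ from Lemma \ref{lem:simpl_flat}. That outer shell is fine. The problem is the one step you defer, which is not cosmetic: the auxiliary claim you propose to prove is false as stated. You want a uniform $m$ such that $(\f^\flat)^m(y)$ always lands in one of the four corners $\<\0,\0\>,\<\1,\0\>,\<\0,\1\>,\<\1,\1\>$ of $A^\flat$, to be extracted from the wheel property $f^2(a)=\1$ for $a\neq\0$. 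But $(\cdot)^\flat$ is applied to $\gA=\gW_k\times\gW_k$, not to $\gW_k$, and in the product that property fails: writing $u$ for the element $\<\1,\0\>$ of $\gW_k\times\gW_k$, normality and unit-preservation give $f(u)=u$, hence $f^m(u)=u\neq\1$ for every $m$. (This failure is unavoidable: condition $(\flat)$ demands nonzero $a,b$ with $f(a)\LAND f(b)=\0$, which is incompatible with every nonzero element iterating to $\1$.) Consequently $\<u,\0\>$ is a fixed point of $\f^\flat$ that is not a corner, and no iterate of $\f^\flat$ collapses $A^\flat$ onto the corner sublattice.

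The damage is that your $\psi_e$ then asserts $e^{(\f^\flat)^m(y)}(\vec x)$ for values of $(\f^\flat)^m(y)$ such as $\<u,\0\>$, $\<u,\1\>$, $\<\1,u\>$ (with $u$ one of the two ``half-units'' of $\gW_k\times\gW_k$), so the equivalence with $\gA\models e$ must be verified for each of these relativization points separately, not only for $\<\1,\0\>$ and $\<\0,\1\>$ via \eqref{eq:flat_rel}. For $\<u,\0\>$ this is benign (its down-set is closed under $\f^\flat$ and carries a copy of $\gW_k$, which satisfies $e$ iff $\gW_k\times\gW_k$ does), but for a value like $\<u,\1\>$ the down-set is not $\f^\flat$-closed --- a pair $\<b,c\>$ with $\0<b\leq u$ and $\0<c<\1$ is sent to $\<\1,\1\>$ --- so the relativized identity no longer describes a subalgebra and needs its own argument or a modification of the formula. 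The paper does not argue via a collapse claim at all: it works with the elements $a$ satisfying $\f^\flat(\f^\flat(a))=a$ together with the behaviour of $-a$, and derives the biconditional from there. Either way, this finite case analysis over the non-corner values is exactly the content you must supply, and the route you sketch for supplying it (invoking $f^2(a)=\1$ on nonzero elements) does not go through.
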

\begin{proof}
	$X\neq Y$ implies $\VAR(X)\neq\VAR(Y)$, and thus there is an identity 
	$e(\vec x)$ such that 
	\[\VAR(X)\models\forall x\  e(\vec x)\qquad \text{ while }\qquad  \VAR(Y)\nmodels\forall x \ e(\vec x)\,, \label{eq:anemb2-c} \]
	or the other way around -- let us suppose \eqref{eq:anemb2-c}.  
	Let $n\in Y$ be such that $\gW_n\nmodels\forall x \ e(\vec x)$. Then $\gW_n\times \gW_n\nmodels\forall x \ e(\vec x)$, since $\gW_n$ diagonally embeds into $\gW_n\times \gW_n$.

    For every $(\gW_m\times \gW_m)^\flat$
    the only elements $a$ such that 
    $\f^\flat(\f^\flat(a)) = a$ \emph{and} $\f^\flat(\f^\flat(-a))=-a$ hold are $a = \<\0,\0\>$, $\<\1,\0\>$, $\<\0,\1\>$, or $\<\1,\1\>$.
    Further, if $\f^\flat(\f^\flat(a))\neq\<\1,\1\>$ \emph{and} 
    $\f^\flat(\f^\flat((-a))\neq\<\1,\1\>$, then $a$ must be either $\<\1,\0\>$
    or $\<\0,\1\>$. 
    Using (\ref{eq:flat_rel})  it follows that for any identity $e$, $\gW_m\times \gW_m\models \forall \vec x\ e(\vec x)$
    if and only if
	\[(\gW_m\times \gW_m)^\flat\models
	\forall \vec x\forall y\big(
	\f(\f(y))=\1\ \lor\  \f(\f(y))=\0\ \lor\  e^{\f(\f(y))}(\vec x)\big)\,.
	\]
	Let this formula be $\psi_e(\vec x,y)$. Then we have
	\[
	\gW_m\times \gW_m\models \forall\vec x\;e(\vec x)\quad\text{ iff }\quad
	(\gW_m\times \gW_m)^\flat\models \psi_e(\vec x, y)\,.
	\]
	for all $m\in Prim$.  Therefore
	\[
	(\forall k\in X)\ (\gW_k\times\gW_k)^\flat\models \psi_e(\vec x, y)\qquad \text{ while }\qquad (\gW_n\times\gW_n)^\flat\nmodels\psi_e(\vec x,y)\,. \label{eqanemb_flat}
	\]
	By J\'onsson's lemma \cite{Jonsson1967}, 
	\[ (\VAR^\flat(X))_{SI} \ \subseteq\ \mathbf{HSP}_U\big(\{(\gW_k\times\gW_k)^\flat:\; k\in X\}\big).
	\]
	As operations $\mathbf{H}$, $\mathbf{S}$ and $\mathbf{P}_U$ preserve
	positive universal formulas, and $\psi_e(\vec x, y)$ is a positive universal formula, by \eqref{eqanemb_flat} it follows that every member of 
	$(\VAR^\flat(X))_{SI}$ must satisfy $\psi_e(\vec x,y)$. By Lemma \ref{lem:simpl_flat}, 
	$(\gW_n\times\gW_n)^\flat$ is simple, and by \eqref{eqanemb_flat}, $(\gW_n\times\gW_n)^\flat\nmodels\psi_e(\vec x,y)$, consequently 
	$(\gW_n\times\gW_n)^\flat\notin (\VAR^\flat(X))_{SI}$. This proves that $\VAR^\flat(X)\neq \VAR^\flat(Y)$.
\end{proof}

% ==========================================================
% ==========================================================
% ==========================================================

% ==========================================================
% ==========================================================
% ==========================================================

\section*{Funding}
The first author was supported by the grant 2019/34/E/HS1/00044 of the National Science Centre (Poland), and by the grant of the Hungarian National Research, Development and Innovation Office, contract number: K-134275.
The second author is supported by the ÚNKP-23-3 New National Excellence Program of the Ministry for Culture and Innovation from the source of the national research, development and innovation fund.

\section*{Acknowledgement}
The authors wish to acknowledge the Cracow Logic Conference (CLoCk) 2023 where the open problems were presented. Special thanks goes to Krzysztof Krawczyk.
%We wish to express our gratitude towards the anonymous referee for the careful reading of the manuscript and the helpful suggestions. 

\section*{Appendix}

That every axiomatic extension of $\proves_{\bfK}$ has LDDT is a well-known theorem (cf. \cite{Krawczyk2023} or \cite{Czelakowski1986}). The algebraic formulation of this theorem is that any normal and additive Boolean frame has the congruence extension property (cf. \cite{Kowalski2000}). The proof of this statement is rather simple, but for completeness, we provide a proof below. Then, we show that normality is not needed, i.e. that additive, but not necessarily normal Boolean frames have CEP as well.

\begin{proposition}\label{prop:nacep}
    If $\gA$ is a  normal and additive Boolean frame, then it has the CEP.
\end{proposition}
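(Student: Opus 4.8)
The plan is to translate the whole statement into the language of Boolean ideals. Since $\gA$ is additive, $\f$ is monotone, and the first step I would take is to record the standard congruence--ideal dictionary: the congruences of a normal additive Boolean frame are in bijection with those Boolean ideals $I\subseteq A$ that are closed under the operator, i.e. $\f[I]\subseteq I$. To a congruence $\Theta$ one assigns its zero-class $I=\{a:a\,\Theta\,\0\}$, and to an ideal $I$ one assigns the relation $a\,\Theta_I\,b$ iff $(a\LAND -b)\LOR(b\LAND -a)\in I$ (write $a\triangle b$ for this symmetric difference). The only nonroutine point in setting up the bijection is that $\Theta_I$ respects $\f$ exactly when $I$ is $\f$-closed. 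The forward implication uses normality, since $a\in I$ means $a\,\Theta_I\,\0$, so $\f(a)\,\Theta_I\,\f(\0)=\0$. The converse is where additivity enters: from $a\triangle b\in I$ and $a\leq b\LOR(a\triangle b)$ one gets $\f(a)\leq\f(b)\LOR\f(a\triangle b)$, hence $\f(a)\LAND -\f(b)\leq\f(a\triangle b)\in I$, and symmetrically for $\f(b)\LAND -\f(a)$, so $\f(a)\triangle\f(b)\in I$.

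With this dictionary in hand, the CEP becomes a purely ideal-theoretic extension statement. Given a subalgebra $\gB\subseteq\gA$ and $\Theta\in\Con(\gB)$, let $I$ be the corresponding $\f$-closed ideal of $\gB$. I would then simply take $J$ to be the ideal of $\gA$ generated by $I$. Because $I$ is already closed under finite joins (computed identically in $\gB$ and in $\gA$), this generated ideal is just the downward closure $J=\{a\in A:a\leq i\text{ for some }i\in I\}$.

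Two things then need checking: that $J$ restricts correctly and that $J$ is again $\f$-closed. For the first, if $a\in B$ and $a\leq i\in I$, then $a\in I$ because $I$ is downward closed in $\gB$; hence $J\cap B=I$, and since for $a,b\in B$ the symmetric difference $a\triangle b$ again lies in $B$, the relations $\Theta_J$ and $\Theta_I=\Theta$ agree on $B$, giving $\Theta=\Psi\cap(B\times B)$ where $\Psi=\Theta_J$. For the second, take $a\in J$, say $a\leq i\in I$; monotonicity of $\f$ gives $\f(a)\leq\f(i)$, and $\f(i)\in I$ because $I$ is $\f$-closed and $\f$ is computed the same way in $\gB$ and in $\gA$; therefore $\f(a)\in J$. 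Thus $J$ is an $\f$-closed ideal of $\gA$, so by the dictionary $\Psi=\Theta_J\in\Con(\gA)$, and it witnesses the CEP.

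I expect the genuine content to sit in the $\f$-closure of the generated ideal: the $J\cap B=I$ and restriction computations are routine bookkeeping about Boolean ideals, but the closure step is exactly where the hypotheses are used, through the monotonicity supplied by additivity (normality being needed only to pin down the zero-class in the congruence--ideal correspondence). This also explains the remark that normality can be dropped for additive frames: one only needs to re-run the correspondence with the appropriate handling of $\f(\0)$, while the extension argument via the generated ideal is unchanged.
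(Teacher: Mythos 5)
Your proof is correct and is essentially the paper's argument in its Boolean-dual form: where you extend the $\f$-closed ideal corresponding to $\Theta$ by taking its downward closure in $\gA$, the paper extends the associated filter $\{x\leftrightarrow y:\ x\,\Theta\,y\}$ by taking its upward closure, and both verifications rest on the same two facts (monotonicity for closure under $\f$, additivity for the inequality $\f(a)\triangle\f(b)\leq\f(a\triangle b)$, equivalently $\f(a\leftrightarrow b)\leq\f(a)\leftrightarrow\f(b)$ in the filter picture). The only organizational difference is that you package the additivity step once into the congruence--ideal dictionary so that the extension step itself uses only monotonicity, whereas the paper checks congruentiality of the extended filter directly; this is a repackaging rather than a different route.
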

\begin{proof}
    Let $\gB$ be a subalgebra of $\gA$ and $\Theta\in\Con(\gB)$. 
    The set
    \[ F = \{ x\leftrightarrow y:\; x\;\Theta\;y,\; x,y\in B\}\subseteq B\]
    is a normal, congruential filter: if $x\in F$ then $f(x)\in F$, and
    if $x\leftrightarrow y\in F$, then
    $f(x)\leftrightarrow f(y)\in F$. (Cf. \cite[Sec. 5]{Krawczyk2023} or \cite[p.223]{chagrov1997modal}). Let us define 
    \[ G = \{a\in A:\; \exists b\in F\; ( b\leq a)\}\,. \]
    Then $G$ is a filter on $\gA$ which extends $F$. We show that
    $G$ is normal and congruential. Pick $a\leftrightarrow b\in G$. Then
    there is $x\in F$ such that $x$ $\leq$ $a\leftrightarrow b$. 
    By monotonicity of $f$ we get
    \[
        f(x) \leq f(a\leftrightarrow b)\,,
    \]
    and by additivity
    \[
        f(a\leftrightarrow b) \leq f(a)\leftrightarrow f(b)\,.
    \]
    As $f(x)\in F$, we get $f(a)\leftrightarrow f(b)\in G$. That $G$ is normal follows from monotonicity of $f$. 

    The congruence
    \[ \Psi = \{  (a,b):\; a,b\in A,\; a\leftrightarrow b\in G\}\]
    corresponding to $G$ is the desired extension of $\Theta$.
\end{proof}

\begin{proposition}
    If $\gA$ is an additive (but not necessarily normal) Boolean frame, then it has the CEP.
\end{proposition}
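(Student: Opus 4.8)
The plan is to reduce to the normal case already settled in Proposition \ref{prop:nacep} by ``shifting away'' the one obstruction to normality, namely the element $c\defeq\f(\0)$. Additivity of $\f$ forces monotonicity (if $x\le y$ then $\f(y)=\f(x\LOR y)=\f(x)\LOR\f(y)$), so $c=\f(\0)\le\f(x)$ for every $x\in A$; thus $c$ is a uniform lower bound of the range of $\f$. I would then introduce a new unary operation $g$ on the same Boolean reduct by $g(x)\defeq\f(x)\LAND -c$. A one-line computation gives $g(\0)=c\LAND -c=\0$, so $g$ is normal, and Boolean distributivity gives $g(x\LOR y)=(\f(x)\LOR\f(y))\LAND -c=g(x)\LOR g(y)$, so $g$ is additive. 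Hence $\gA'\defeq\<A,\LAND,-,\0,g\>$ is a normal additive Boolean frame, to which Proposition \ref{prop:nacep} applies, yielding that $\gA'$ has the CEP.

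The crux is that $\f$ and $g$ are mutually definable once the fixed element $c=\f(\0)$ is available: $g(x)=\f(x)\LAND -c$, and conversely $\f(x)=g(x)\LOR c$, because $\f(x)\ge c$ (so $(\f(x)\LAND -c)\LOR c=\f(x)\LOR c=\f(x)$). First I would check that $\gA$ and $\gA'$ have exactly the same congruences: if a Boolean congruence $\theta$ respects $\f$, then, using $c\,\theta\,c$ together with $g(x)=\f(x)\LAND -c$, it respects $g$; conversely, if $\theta$ respects $g$, then from $\f(x)=g(x)\LOR c$ and $c\,\theta\,c$ it respects $\f$. The same argument, carried out on any subalgebra containing $c$, shows that the congruence lattices of $\<B,\f\>$ and $\<B,g\>$ coincide. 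Crucially, this step only invokes reflexivity of $\theta$ at the single element $c$, so it is insensitive to whether $c$ is term-definable.

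With these observations the deduction is immediate, and it is here that I use that $c=\f(\0)$ lies in \emph{every} subalgebra of $\gA$ (each contains $\0$ and is closed under $\f$). Given a subalgebra $\gB\le\gA$ and $\Theta\in\Con(\gB)$, the set $B$ is also closed under $g$ (since $g(x)=\f(x)\LAND -c$ and $c\in B$), so $\<B,g\>$ is a subalgebra of $\gA'$, and $\Theta\in\Con(\<B,g\>)$ by the previous paragraph. Applying the CEP of $\gA'$ produces $\Psi\in\Con(\gA')$ with $\Theta=\Psi\cap(B\times B)$; but $\Con(\gA')=\Con(\gA)$, so $\Psi$ is a congruence of $\gA$ extending $\Theta$, as required.

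I expect the only genuine obstacle to be bookkeeping rather than mathematics. One must be careful that the passage between $\f$ and $g$ uses the honestly fixed parameter $c=\f(\0)$ (not a term built from $g$, which cannot recover $c$ since $g(\0)=\0$), and that $c$ belongs to every subalgebra, so that the correspondences of subalgebras and of their congruences are uniform. I would note in passing that the subalgebras of $\gA$ and $\gA'$ need \emph{not} coincide — only every $\gA$-subalgebra is a $\gA'$-subalgebra — but this one direction is all that the CEP transfer requires. Granting this, the additive-but-not-normal case collapses cleanly onto Proposition \ref{prop:nacep}.
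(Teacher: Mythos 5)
Your proposal is correct and is essentially the paper's own proof: both define $g(x)=\f(x)\LAND -\f(\0)$, verify that $g$ is normal and additive, recover $\f$ as $g(x)\LOR\f(\0)$, show the congruence lattices of the two frames coincide, and then invoke Proposition \ref{prop:nacep}. Your extra remarks on why $\f(\0)$ lies in every subalgebra and why only one direction of the subalgebra correspondence is needed are welcome clarifications of steps the paper leaves implicit.
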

\begin{proof}
    Suppose $\gA = \<A, f\>$ is a Boolean frame such that $f$ is additive. Note that additivity implies monotonicity, therefore
    $f(\0)\leq f(x)$ for every $x\in A$. Let us define the operation $g:A\to A$ by
    \[
        g(x) \defeq f(x)- f(\0)\,.
    \]
    It is straightforward to check that $g$ is normal and additive. Indeed, $g(\0) = f(\0)- f(\0) = \0$, and 
    \[
       g(x\LOR y) \quad&=\quad f(x\LOR y)- f(\0) \quad=\quad
       \big( f(x)\LOR f(y)\big)- f(\0) \\ &=\quad 
       (f(x)- f(\0))\ \LOR\  (f(y)- f(\0)) \\ &=\quad 
       g(x)\LOR g(y).
    \]
    As $f(\0)\leq f(x)$ we have 
    $f(x) = (f(x)-f(\0))\LOR f(\0)$. Thus
    \[ f(x) = g(x)\LOR f(\0) \qquad\text{for all } x\in A\,. \]
    Next, we show 
    \[ \Con(A, f) = \Con(A, g)\,\]
    For $\Theta\in \Con(A,f)$ we have 
    \[  x\;\Theta\; y\quad&\Rightarrow\quad f(x)\;\Theta\; f(y) \\
        &\Rightarrow\quad (f(x)-f(\0)) \;\Theta\; (f(y)-f(\0)) \\
        &\Rightarrow\quad g(x) \;\Theta\; g(y)\,, \]
    hence $\Theta\in\Con(A,g)$. Similarly, for $\Theta\in\Con(A,g)$
    we have
    \[  x\;\Theta\; y\quad&\Rightarrow\quad g(x)\;\Theta\; g(y) \\
        &\Rightarrow\quad (g(x)\LOR f(\0)) \;\Theta\; (g(y)\LOR f(\0)) \\
        &\Rightarrow\quad f(x) \;\Theta\; f(y)\,, \]
    hence $\Theta\in\Con(A,f)$.

    Take a subalgebra $(B,f)$ of $(A,f)$ and a congruence $\Psi\in\Con(B,f)$. Then $(B,g)$ is a subalgebra of $(A,g)$, and $\Psi\in\Con(B,g)$. As $(B,g)$ is normal and additive, by Proposition \ref{prop:nacep} it has the congruence extension property, and thus there is a congruence $\Theta\in \Con(A,g)$ such that $\Theta\cap (B\times B) = \Psi$. But $\Theta\in\Con(A,f)$ as well.    
\end{proof}

Finally, we note that additivity is not necessary for having the CEP. In fact, it is straightforward to construct not additive varieties of Boolean frames having the congruence extension property. For instance, let $\VAR$ be the variety of Boolean frames satisfying the identity $\f(x)=-x$. As $\f$ is Boole-definable, the variety inherits the CEP from Boolean algebras.

\iffalse

The algebra illustrated below is simple and has no proper subalgebras, therefore it has the CEP. But it is not additive.

\begin{center}
        \begin{tikzpicture}[thick,scale=1]
            \draw (0,0) node [below] {} -- %{$\<\0,\0\>$} --
                  (-1,1) node [left] {} --%{$\<\1,\0\>$} --
                  (0,2) node [above] {} --%{$\<\1,\1\>$} --
                  (1,1) node [right] {} -- (0,0); %{$\<\0,\1\>$} -- (0,0);
            \draw [fill] (0,0) circle [radius=.06]; 
            \draw [fill] (-1,1) circle [radius=.06];
            \draw [fill] (0,2) circle [radius=.06]; 
            \draw [fill] (1,1) circle [radius=.06];

            \draw[-Latex,dashed, thin] (0,0) to [bend left] (-1,1);
            \draw[-Latex,dashed, thin] (-1,1) to (1,1);
            \draw[-Latex,dashed, thin] (1,1) to [bend right] (0,2);
            
            \draw[-Latex,dashed,out=40,in=150,looseness=5,loop,min distance=10mm,thin] (0.1,2) to (-0.1,2); 
\end{tikzpicture}
\end{center}

\fi


\begin{thebibliography}{10}
\providecommand{\selectlanguage}[1]{\relax}

\bibitem{BP89}
W.~J. Blok, D.~Pigozzi, \emph{Algebraizable logics}, \textbf{Mem. Amer. Math.
  Soc.}, vol.~77(396) (1989), pp. vi+78.

\bibitem{chagrov1997modal}
A.~Chagrov, M.~Zakharyaschev, \textbf{Modal Logic}, Oxford Logic Guides,
  Clarendon Press (1997).

\bibitem{Czelakowski1985}
J.~Czelakowski, \emph{Algebraic Aspects of Deduction Theorems}, \textbf{Studia
  Logica}, vol.~44(4) (1985), pp. 369--387.

\bibitem{Czelakowski1986}
J.~Czelakowski, \emph{Local Deductions Theorems}, \textbf{Studia Logica},
  vol.~45(4) (1986), pp. 377--391.

\bibitem{Fine1974}
K.~Fine, \emph{An Ascending Chain of {S}4 Logics}, \textbf{Theoria}, vol.~40(2)
  (1974), pp. 110--116.

\bibitem{Hansson1973}
B.~Hansson, P.~G\"{a}rdenfors, \emph{A Guide to Intensional Semantics}, [in:]
  S.~Halld\'{e}n (ed.), \textbf{Modality, Morality and Other Problems of Sense
  and Nonsense}, Lund, Gleerup (1973), pp. 151--167.

\bibitem{Jonsson1967}
B.~J\'onsson, \emph{Algebras whose congruence lattices are distributive},
  \textbf{Mathematica Scandinavica}, vol.~21(2) (1967).

\bibitem{Kowalski2000}
T.~Kowalski, \emph{A remark on quasivarieties of modal algebras},
  \textbf{Bulletin of the Section of Logic}, vol.~29(1) (2000).

\bibitem{Krawczyk2023}
K.~A. Krawczyk, \emph{Deduction Theorem in Congruential Modal Logics},
  \textbf{Notre Dame Journal of Formal Logic}, vol.~64(2) (2023), pp. 185--196.

\bibitem{Miyazaki2005}
Y.~Miyazaki, \emph{Normal Modal Logics Containing KTB with Some Finiteness
  Conditions}, [in:] R.~Schmidt, I.~Pratt{-}Hartmann, M.~Reynolds, H.~Wansing
  (eds.), \textbf{Advances in Modal Logic, Volume 5}, CSLI Publications (2005),
  pp. 171--190.

\bibitem{Ono2019}
H.~Ono, \textbf{Proof Theory and Algebra in Logic}, Springer Singapore,
  Singapore (2019).

\bibitem{Pacuit2017}
E.~Pacuit, \textbf{Neighborhood Semantics for Modal Logic}, Springer, Cham,
  Switzerland (2017).

\end{thebibliography}
\end{document}